\numberwithin{equation}{section}
\newtheorem{theorem}{Theorem}[section]
\newtheorem{corollary}[theorem]{Corollary}
\newtheorem{lemma}[theorem]{Lemma}
\newtheorem{proposition}[theorem]{Proposition}
\newtheorem{question}[theorem]{Question}
\theoremstyle{definition}
\newtheorem{definition}[theorem]{Definition}
\newtheorem{notation}[theorem]{Notation}
\theoremstyle{remark}
\newtheorem{remark}[theorem]{Remark}
\newcommand{\N}{\mathbb{N}}
\newcommand{\Z}{\mathbb{Z}}
\DeclareMathOperator{\ab}{ab}
\DeclareMathOperator{\id}{id}
\DeclareMathOperator{\Homeo}{Homeo}
\DeclareMathOperator{\supp}{supp}
\DeclareMathOperator{\Sym}{Sym}
\newcommand{\abs}[1]{\vert #1 \vert}
\newcommand\Set[2]{\{\,#1\mid#2\,\}}
\newcommand{\defeq}{\mathrel{\mathop{:}}=}
\renewcommand{\epsilon}{\varepsilon}
\providecommand{\comeddy}[1]{\textsuperscript{\textcolor{blue}{?}}{\marginpar{\begin{flushleft} \tiny \textbf{Ed.:} \textcolor{blue}{#1} \end{flushleft}}}}
\title[The Higman--Thompson groups $V_n$ are $(2,2,2)$-generated]{The Higman--Thompson groups $V_n$ are $(2,2,2)$-generated}
\author[E. Schesler]{Eduard Schesler}
\address{Karlsruhe Institute of Technology, Englerstr.\ 2, 76131 Karlsruhe, Germany}
\email{eduardschesler@googlemail.com}
\author[R. Skipper]{Rachel Skipper}
\address{University of Utah, 155 S 1400 E, Salt Lake City, UT 84112 USA}
\email{rachel.skipper@utah.edu}
\author[X. Wu]{Xiaolei Wu}
\address{Shanghai Center for Mathematical Sciences, Jiangwan Campus, Fudan University, No.2005 Songhu Road, Shanghai, 200438, P.R. China}
\email{xiaoleiwu@fudan.edu.cn}
\keywords{The Higman--Thompson groups, $(2,2,2)$-generated.}
\begin{document}
\begin{abstract}
We provide a family of generating sets $S_{\alpha}$ of the Higman--Thompson groups $V_n$ that are parametrized by certain sequences $\alpha$ of elements in $V_n$.
These generating sets consist of $3$ involutions $\sigma$, $\tau$, and $s_{\alpha}$, where the latter involution is inspired by the class of spinal elements in the theory of branch groups.
In particular this shows the existence of generating sets of $V_n$ that consist of $3$ involutions.
%, which answers a question of Corson, Hughes, M\"oller, and Varghese.
\begin{comment}
We construct a family of generating sets $S_{\alpha}$ of the Thompsons groups $V_n$ that consist of $3$ involutions $\sigma,\tau,s_{\alpha}$, where $\alpha$ is an appropriate finite sequence of elements of $V_n$.
The latter in particular solves a question of Corson, Hughes, M\"oller, and Varghese.
\end{comment}
\end{abstract}
\maketitle

\section{Introduction}
The Thompson group $V$ first appeared in some handwritten never published notes by Richard Thompson \cite{Tho65}. Together with its subgroup $T$, they are the first known finitely presented infinite simple groups. These were generalized by Higman to the Higman--Thompson groups denoted $V_n$ \cite{Hig74} and have been extensively studied in the intervening decades. In fact, with few exceptions \cite{BM97, CapraceRemy06}, essentially all known examples of infinite, finitely presented simple groups have been modeled on, or built by extending, the constructions of $T$ or $V$. These groups naturally occur in many different contexts, such as geometric group theory \cite{BrownGeoghegan84, Brown87b}, $C^\ast$-algebras \cite{Nekrashevych04}, and others. We direct the reader to \cite{CannonFloydParry96, Hig74} for an introduction on these groups.

The group $V_n$ has a rich subgroup structure. For instance it contains every finite group as a subgroup. The Higman--Thompson groups are often thought of as an infinite analog of a finite symmetric group and thus it is an interesting question to find analogous small generating sets or presentations for $V_n$. Donoven and Harper proved that $V_n$ is $\frac{3}{2}$-generated for any $n\geq2$ \cite[Theorem 1]{DonovenHarper20}, i.e.~$V_n$ 
 is $2$-generated and every nontrivial element of $V_n$ is contained in a generating pair, a property satisfied by every finite simple group \cite{GK00}. Similarly, Bleak and Quick found a presentation for $V$ with only $2$ generators and $7$ relations \cite[Theorem 1.3]{BleakQuick2017} by exploiting a permutational description of elements of $V$. In fact, the two generators are of order $3$ and $6$. A particularly nice case is when a generating set consists of a small number of involutions, i.e.\ order $2$ elements.
 %,  as it gives $V_n$ as a quotient of a Coxeter group.
 In this direction, Higman showed in \cite{Hig74} that $V$ can be generated by $4$ involutions. This was generalized to all $V_n$ in \cite[Theorem 4.1]{CorsonHughesMollerVarghese23} by Corson, Hughes, M\"uller and Varghese.
 It was therefore natural for them to ask the following question, which was brought to us by Olga Varghese, see~\cite[Question 1.3]{CorsonHughesMollerVarghese23} in the more recent version.

\begin{question}
Is $V_n$, with $n\geq 2$, $(2,2,2)$-generated? Is it
$(2,3)$-generated?
\end{question}

Recall that a group is called $(2,2,2)$-generated if it can be generated by three involutions; similarly, it is $(2,3)$-generated if it can be generated by an involution and an order $3$ element.  We answer their first question in the positive.

\begin{theorem}
    The Higman--Thompson group $V_n$ is $(2,2,2)$-generated for every $n\geq 2$.
\end{theorem}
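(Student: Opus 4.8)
The plan is to exhibit the three involutions explicitly and then show that the subgroup they generate already contains a known generating set of $V_n$. I would work in the standard model where $V_n$ acts faithfully on the Cantor set $\mathcal{C}=\{0,1,\dots,n-1\}^{\N}$ by prefix replacements, an element being encoded by a bijection between two complete prefix codes. In this model, for incomparable finite words $u,v$ the map $t_{u,v}$ swapping the cylinders $[u]$ and $[v]$ via $uw\leftrightarrow vw$ (and fixing everything else) is an involution of $V_n$, and I would reduce the problem to producing, inside $\langle\sigma,\tau,s_\alpha\rangle$, a fixed finite generating set of $V_n$: for instance the copy of $\Sym(n)$ acting on the first-level cylinders together with one ``expansion'' element, or alternatively the four involutions of \cite[Theorem 4.1]{CorsonHughesMollerVarghese23}.

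The two involutions $\sigma$ and $\tau$ are ``global'' and do not depend on $\alpha$. A natural choice is to let $\tau$ be the transposition $t_{0,1}$ of the first two cylinders and $\sigma$ an involution arranged so that $\sigma\tau$ has infinite order and acts like a shift along the rightmost ray of the tree, the \emph{spine}, so that conjugating a finitely supported element by $(\sigma\tau)^{k}$ pushes it into the cylinder $[(n-1)^{k}]$; the dihedral group $\langle\sigma,\tau\rangle$ then serves as scaffolding. The third involution $s_\alpha$ is the \emph{spinal} element: reading down the spine, at depth $k$ it acts on the pendant subtrees by a prescribed element $\alpha_k$ lying in a finite symmetric subgroup of $V_n$, and it is trivial past the length of the finite sequence $\alpha$. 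It is an involution precisely because each $\alpha_k$ is, and it lies in $V_n$ because $\alpha$ is finite.

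The core of the argument is a bootstrapping computation. Using $\sigma$ and $\tau$ alone I would first recover a copy of $\Sym(n)$ acting on the first level and, after conjugating by the powers $(\sigma\tau)^{k}$, transpositions of cylinders arbitrarily deep along the spine. Conjugating and commuting $s_\alpha$ with these elements should then yield transpositions $t_{u,v}$ of pendant cylinders off the spine at every depth; since, after relabelling, every cylinder of $\mathcal{C}$ is pendant to the spine at some finite depth, one obtains every $t_{u,v}$, hence the target generating set, hence $\langle\sigma,\tau,s_\alpha\rangle=V_n$. The sequence $\alpha$ has to be chosen so that this last step closes up --- rich enough that the conjugation tricks recover all of $\Sym(n)$ at each relevant node --- and for the theorem it is enough that one such $\alpha$ exists.

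\textbf{The main obstacle} is the tension built into $s_\alpha$: an element that visibly squares to the identity acts very symmetrically, which fights against producing non-involutory behaviour such as the $n$-cycles needed in the level permutations. It is exactly this tension that forces a \emph{sequence} $\alpha$ rather than a single directed generator, and verifying that some admissible $\alpha$ simultaneously keeps $s_\alpha$ an involution in $V_n$ and lets the bootstrapping recover a full generating set is the technical heart of the paper. A secondary, bookkeeping difficulty is uniformity in $n$: the case $n=2$, where $\Sym(n)$ is itself a single involution and there is little room in the level permutations, is likely to need separate attention, so I would expect the proof to set up a base construction and then tune $\alpha$ case by case.
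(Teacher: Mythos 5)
Your overall architecture matches the paper's: two fixed involutions $\dot{\sigma},\tau$ whose product $t=\dot{\sigma}\tau$ acts as a shift along the spine $x_1^{\infty}$, plus a spinal involution $s_{\alpha}$ decorated along that spine, with conjugation by powers of $t$ used to translate decorations. But there are two genuine gaps. First, the claim that ``using $\sigma$ and $\tau$ alone I would first recover a copy of $\Sym(n)$ acting on the first level'' cannot work: $\langle\sigma,\tau\rangle$ is a quotient of the infinite dihedral group, whose subgroups are all cyclic or dihedral, so it contains no copy of $\Sym(n)$ for $n\geq 4$. In the paper the first-level symmetric group is only recovered at the very end, \emph{after} one already has $x_1V_n\leq H_{\alpha}$, i.e.\ after heavy use of $s_{\alpha}$; it is not scaffolding available at the start.

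Second, and more importantly, you have not identified the mechanism by which individual decorations of $s_{\alpha}$ are isolated, which is the technical heart of the proof. The element $s_{\alpha}$ carries all of $\alpha_1,\ldots,\alpha_{\ell}$ simultaneously, and conjugating by $t^k$ only shifts the whole package; ``conjugating and commuting $s_{\alpha}$ with these elements should then yield transpositions of pendant cylinders'' does not explain how a single $\alpha_i$ is ever extracted. The paper does this by (i) requiring the support set $I=\{i:\alpha_i\neq\id\}$ to have the \emph{unique difference property}, so that in the commutator $[s_{\alpha},s_{\alpha}^{t^{j-i}}]$ exactly one node carries a nontrivial commutator $[\alpha_j,\alpha_i]$ while all other contributions cancel; (ii) absorbing the leftover term $[\dot{\sigma},x_1^{j-i}\dot{\sigma}]$ on the spine into the locally finite group $E_n$ of volume-preserving elements, and killing it with a subdirect-product lemma (infinite simple $\times$ finite); and (iii) choosing the $\alpha_i$ to be a \emph{commutator stable} generating set of the simple group $V_n'$, so that the isolated commutators $[\alpha_j,\alpha_i]$ already generate $V_n'$ at a node --- this is also what resolves the ``tension'' you worry about, since commutators of involutions need not be involutions. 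Without some substitute for these three ingredients the bootstrapping step does not close up. (Your worry that $n=2$ needs separate treatment is unfounded in the paper's approach, which is uniform in $n$.)
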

\begin{remark}
    $V_n$ is not $(2,2)$-generated, as if so, $V_n$ would be a quotient of $\mathbb{Z}/2\bigast \mathbb{Z}/2$. But $\mathbb{Z}/2\bigast \mathbb{Z}/2$ is isomorphic to the infinite dihedral group which is solvable, while $V_n$ contains  free subgroups of rank $2$.
\end{remark}

In this context, we are also aware of related work in progress by Bleak, Donoven, Harper, and Hyde showing finitely generated simple vigorous groups are quotients of $C_2\ast C_3$ and working out explicit generators in the case of $V=V_2$. 

\subsection*{Acknowledgements.}
RS was partially supported by NSF DMS-2005297. XW was partially supported by NSFC 12326601. We thank the Shanghai Center for Mathematical Sciences, where this project started, for the hospitality.

\section{Background on $V_n$}

\noindent For the rest of this section we fix a finite set $X_n \defeq \{x_1,\ldots,x_n\}$ of cardinality $n \geq 2$, which we will think of as an alphabet. 
The Cantor set $X_n^{\N}$ will be denoted by $\mathfrak{C}_n$.
We write $X_n^{\ast} = \coprod \limits_{\ell \in \N_0} X_n^{\ell}$ to denote the set of finite words over $X_n$.
For every word $w \in X_n^{\ast}$ we refer to $w\mathfrak{C}_n$ as the set of elements in $\mathfrak{C}_n$ that have $w$ as an initial subword.
More generally, we write $A\mathfrak{C}_n \defeq \bigcup_{w \in A} w \mathfrak{C}_n$ for every subset $A \subseteq X_n^{\ast}$. We shall call a finite subset $A$ of $X_n^{\ast}$ \emph{a 
$n$-adic partition set of $\mathfrak{C}_n$} if $\mathfrak{C}_n = \bigcup_{w \in A} w \mathfrak{C}_n$ and $w\mathfrak{C}_n \bigcap w'\mathfrak{C}_n = \emptyset$ for any $w\neq w'$ in $A$.

There are a number of equivalent definitions of the group $V_n$, see e.g.~\cite{Hig74}.
The following is the one most suited for our purposes.

\begin{definition}\label{def:V_n}
The group $V_n$ is defined as the subgroup of $\Homeo(\mathfrak{C}_n)$ that consists of those homeomorphisms $\alpha$ for which there exists a $n$-adic partition set $A$ of $\mathfrak{C}_n$ and a map $f \colon A \rightarrow X_n^{\ast}$ such that $\alpha$ is given by $\alpha(w \xi) = f(w) \xi$ for every $w \in A$.
\end{definition}

\begin{notation}\label{not:support}
For every $\gamma \in V_n$, we write
\[
\supp(\gamma) = \Set{\xi \in \mathfrak{C}_n}{\gamma(\xi) \neq \xi}
\]
to denote the \emph{support of $\gamma$ in $\mathfrak{C}_n$}.
\end{notation}

\begin{notation}\label{not:gamma-vert-v}
Given an element $\gamma \in V_n$ and a word $w \in \mathfrak{C}_n$, we write $w\gamma \in V_n$ to denote the unique element with $\supp(w\gamma) \subseteq w\mathfrak{C}_n$ that is given by $w\gamma(w\xi) = w\gamma(\xi)$ for every $\xi \in \mathfrak{C}_n$.
More generally, we write $wH \defeq \Set{wh}{h \in H} \subseteq V_n$ for every subset $H \subseteq V_n$.
\end{notation}

For every permutation $\alpha \in \Sym(X_n)$, let $\dot{\alpha} \in V_n$ denote the element that is given by $\dot{\alpha}(x\xi) = \alpha(x)\xi$ for all $x \in X_n$ and $\xi \in \mathfrak{C}_n$. Similarly, we will write $\dot{\Sym}(X_n)$ for the group of all such elements of $V_n$ permuting the first level of $\mathfrak{C_n}$.
The following is a generalization of  \cite[Proposition 6.3]{BBQS22}. The same proof given there, replacing $V$ with $V_n$, works here as well.

\begin{proposition}\label{prop:maximal}
The group $\oplus_{i=1}^n x_i V_n \rtimes \dot{\Sym}(X_n)$ is maximal in $V_n$.
\end{proposition}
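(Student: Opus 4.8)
Write $M := \bigoplus_{i=1}^{n} x_i V_n \rtimes \dot{\Sym}(X_n)$ for the subgroup in question and let $\mathcal{P} := \{x_1\mathfrak{C}_n,\ldots,x_n\mathfrak{C}_n\}$ be the first–level partition of $\mathfrak{C}_n$. The first point I would record is that $M$ is exactly the setwise stabiliser of $\mathcal{P}$ in $V_n$: every element of $\bigoplus_i x_i V_n$ fixes each block of $\mathcal{P}$, the subgroup $\dot{\Sym}(X_n)$ permutes the blocks, and conversely if $g(x_i\mathfrak{C}_n)=x_{\sigma(i)}\mathfrak{C}_n$ for some $\sigma\in\Sym(X_n)$ then $\dot{\sigma}^{-1}g$ preserves every block, hence lies in $\bigoplus_i x_i V_n$, so $g\in M$. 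For a clopen set $C\subseteq\mathfrak{C}_n$ I write $V_n[C]:=\{h\in V_n\colon \supp(h)\subseteq C\}$; this is a copy of $V_n$, and $x_iV_n=V_n[x_i\mathfrak{C}_n]\leq M$. To prove maximality I would fix a subgroup $K$ with $M\lneq K\leq V_n$ and an element $g\in K\setminus M$, and show $K=V_n$. Two standard facts about $V_n$ enter: (i) the $\Z/(n-1)$–valued content $c(A\mathfrak{C}_n)=|A|\bmod(n-1)$ of a clopen set is well defined and $V_n$–invariant; and (ii) the gluing property $\langle V_n[C],V_n[D]\rangle=V_n[C\cup D]$ whenever $C,D$ are clopen with $C\cap D\neq\emptyset$ (a short commutator computation inside $V_n[C\cup D]$). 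This is essentially the argument of \cite[Proposition 6.3]{BBQS22} with $V$ replaced by $V_n$.

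The heart of the proof is to produce a transposition of two cylinders lying in $K$. Since $g\notin M$, the clopen sets $g(x_1\mathfrak{C}_n),\ldots,g(x_n\mathfrak{C}_n)$ partition $\mathfrak{C}_n$ but do not form $\mathcal{P}$, so a pigeonhole count shows some $g(x_i\mathfrak{C}_n)$ meets at least two blocks. Such a $g(x_i\mathfrak{C}_n)$ cannot be a union of $m\geq 2$ whole blocks: that would have content $m\bmod(n-1)$ while $c(g(x_i\mathfrak{C}_n))=c(x_i\mathfrak{C}_n)=1$, so $m\equiv 1\pmod{n-1}$; as $2\leq m\leq n$ this forces $m=n$, i.e. $g(x_i\mathfrak{C}_n)=\mathfrak{C}_n$, which is absurd. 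Hence $g(x_i\mathfrak{C}_n)$ meets some block $x_j\mathfrak{C}_n$ in a nonempty \emph{proper} subset while also meeting a second block $x_{j'}\mathfrak{C}_n$. As $g\neq\id$, every atom of the range partition of $g$ is a cylinder of length $\geq 1$, hence contained in a single block, so I would pick a range atom $u\mathfrak{C}_n\subseteq g(x_i\mathfrak{C}_n)\cap x_j\mathfrak{C}_n$ (necessarily a proper sub-cylinder of $x_j\mathfrak{C}_n$) and another $v\mathfrak{C}_n\subseteq g(x_i\mathfrak{C}_n)\cap x_{j'}\mathfrak{C}_n$. Then $\bar u\mathfrak{C}_n:=g^{-1}(u\mathfrak{C}_n)$ and $\bar v\mathfrak{C}_n:=g^{-1}(v\mathfrak{C}_n)$ are single cylinders inside $x_i\mathfrak{C}_n$ on which $g$ restricts to the rigid homeomorphisms $\bar u\mathfrak{C}_n\to u\mathfrak{C}_n$ and $\bar v\mathfrak{C}_n\to v\mathfrak{C}_n$. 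Consequently the transposition $\tau_0$ of $\bar u\mathfrak{C}_n$ with $\bar v\mathfrak{C}_n$ lies in $x_iV_n\leq M$, and by rigidity $\tau:=g\tau_0g^{-1}\in K$ is precisely the transposition swapping the cylinders $u\mathfrak{C}_n$ and $v\mathfrak{C}_n$.

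With $\tau$ in hand I would conjugate by a suitable element of $\dot{\Sym}(X_n)\leq M$ so that $u\mathfrak{C}_n\subsetneq x_1\mathfrak{C}_n$ (proper) and $v\mathfrak{C}_n\subseteq x_2\mathfrak{C}_n$. Since $\tau$ swaps $u\mathfrak{C}_n$ and $v\mathfrak{C}_n$ and fixes everything else, $\tau(x_1\mathfrak{C}_n)=(x_1\mathfrak{C}_n\setminus u\mathfrak{C}_n)\cup v\mathfrak{C}_n$, so $V_n[(x_1\mathfrak{C}_n\setminus u\mathfrak{C}_n)\cup v\mathfrak{C}_n]=\tau\,(x_1V_n)\,\tau^{-1}\leq K$. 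Gluing this with $x_1V_n=V_n[x_1\mathfrak{C}_n]$, whose supports overlap in the nonempty set $x_1\mathfrak{C}_n\setminus u\mathfrak{C}_n$, gives $V_n[x_1\mathfrak{C}_n\cup v\mathfrak{C}_n]\leq K$; gluing the result with $x_2V_n$, whose supports overlap in $v\mathfrak{C}_n$, gives $V_n[x_1\mathfrak{C}_n\cup x_2\mathfrak{C}_n]\leq K$. Conjugating by $\dot{\Sym}(X_n)$ yields $V_n[x_1\mathfrak{C}_n\cup x_k\mathfrak{C}_n]\leq K$ for every $k$, and gluing these one at a time (each overlapping the accumulated union in $x_1\mathfrak{C}_n$) produces $V_n[x_1\mathfrak{C}_n\cup\cdots\cup x_n\mathfrak{C}_n]=V_n[\mathfrak{C}_n]=V_n$. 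Hence $K=V_n$ and $M$ is maximal.

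The step I expect to be the main obstacle is the extraction in the second paragraph: turning an arbitrary $g\notin M$ into a genuine cylinder transposition in $K$ with controlled support. Conjugating a transposition $\tau_0$ supported inside a \emph{single} block by $g$ works precisely because $g$ restricts to rigid maps on the chosen preimage cylinders, and the content invariant (i) is exactly what rules out the degenerate possibility that $g$ merely permutes whole blocks — which would put $g$ back inside $M$. The gluing property (ii) is standard for Higman--Thompson groups; if one wishes to avoid citing it, the conjugates of $\tau$ by $M$ already supply all cylinder transpositions straddling two blocks, and one argues directly that these together with $M$ generate $V_n$.
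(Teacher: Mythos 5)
The paper does not actually prove this proposition: it is justified only by the remark that the proof of \cite[Proposition 6.3]{BBQS22} goes through verbatim with $V$ replaced by $V_n$. Your write-up is therefore a self-contained reconstruction rather than something I can match line-by-line against the paper, but its architecture is the standard one for such maximality statements and, as far as I can check, it is correct: the identification of $M$ with the setwise stabiliser of the first-level partition is right; the pigeonhole step producing an $i$ with $g(x_i\mathfrak{C}_n)$ meeting two blocks is right; the content invariant mod $n-1$ correctly rules out $g(x_i\mathfrak{C}_n)$ being a union of $m\geq 2$ whole blocks (and for $n=2$, where the content carries no information, the crude bound $2\leq m\leq n$ already forces the absurd $m=n$); and the conjugation $g\tau_0 g^{-1}$ really does produce the rigid transposition of the two range atoms $u\mathfrak{C}_n$, $v\mathfrak{C}_n$, provided one first refines the domain partition of $g$ so that it refines $\{x_1\mathfrak{C}_n,\ldots,x_n\mathfrak{C}_n\}$ (you implicitly do this; it is worth saying explicitly, since it is what guarantees that the atoms inside $g(x_i\mathfrak{C}_n)\cap x_j\mathfrak{C}_n$ exist and are proper subcylinders of $x_j\mathfrak{C}_n$).

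The one place where I would push back is your input (ii), the gluing property $\langle V_n[C],V_n[D]\rangle=V_n[C\cup D]$ for overlapping clopen sets, which you describe as ``a short commutator computation.'' It is true and citable (it is essentially the fragmentation lemma from the vigorous-groups literature and is what the BBQS argument also rests on), but it is a genuine lemma, not a one-line computation: the usual proof routes through transitivity of $V_n[C\cup D]$ on subcylinders of prescribed content together with generation of the commutator subgroup by small-support elements, and for $n$ odd one must additionally observe that the $\mathbb{Z}/2$ abelianization is no obstruction because each $V_n[C]$ already contains odd elements (rigid transpositions of disjoint cylinders). As written, all of the actual difficulty of the proposition has been pushed into (ii); with a proper citation or proof of that lemma, your argument is complete. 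Your closing remark that the $M$-conjugates of $\tau$ yield \emph{all} cylinder transpositions straddling two blocks is also slightly too quick (conjugation inside $\oplus_i x_iV_n$ preserves whether each swapped cylinder is a proper subcylinder of its block), though this affects only the proposed alternative route, not the main argument.
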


Higman proved in \cite[Section 5]{Hig74} that  $V_n$ is simple when $n$ is even and if $n$ is odd, it has an index $2$ simple subgroup. When $n$ is odd, the index $2$ subgroup can be described by the kernel of the sign map: given any element $g$ of $V_n$, we can represent it by a bijection between two $n$-adic  partition sets $A_1$ and $A_2$ of $\mathfrak{C}_n$. We list the elements of $A_1,A_2$ using the lexicographic order (here we fix an order on $X_n$ by setting $x_1 <x_2\cdots <x_n$),  and identify them with $1,2,\cdots, |A_1|$ using the order. This way we can associate the bijection between $A_1$ and $A_2$ with an element in $\Sym(\{1,2,\cdots |A_1|\})$. The sign of $g$ is defined to be the sign of this permutation. Note that this map is only well-defined when $n$ is odd.
Regarding this, the abelianization of $V_n$ can be described as follows.

\begin{theorem}\label{thm:simple-commutator}
 The abelianization $V_n^{\ab}$ of $V_n$ is generated by the image of $\dot{(x_1,x_2)}$.
\end{theorem}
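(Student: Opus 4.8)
The plan is to reduce everything to Higman's simplicity results recalled just above, handling the two parities of $n$ separately.

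If $n$ is even, then $V_n$ is simple by Higman's theorem, and it is non-abelian since it is infinite (indeed it contains nonabelian free subgroups, as noted in the remark above). An infinite non-abelian simple group equals its own commutator subgroup, so $[V_n,V_n]=V_n$ and $V_n^{\ab}$ is trivial. The claim then holds vacuously, $\dot{(x_1,x_2)}$ mapping to the identity of the trivial group.

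If $n$ is odd, I would work with the sign homomorphism $\mathrm{sgn}\colon V_n\to\Z/2$ described above, whose kernel $V_n^{+}$ is Higman's index-$2$ simple subgroup. Since $\Z/2$ is abelian, $[V_n,V_n]\subseteq V_n^{+}$. Conversely $V_n^{+}$ is infinite simple, hence non-abelian, so $V_n^{+}=[V_n^{+},V_n^{+}]\subseteq[V_n,V_n]$; combining the two inclusions gives $[V_n,V_n]=V_n^{+}$ and therefore $V_n^{\ab}\cong V_n/V_n^{+}\cong\Z/2$. It then remains only to check that $\dot{(x_1,x_2)}$ does not lie in $V_n^{+}$: representing $\dot{(x_1,x_2)}$ by the bijection of the $n$-adic partition set $X_n=\{x_1,\dots,x_n\}$ with itself that interchanges $x_1$ and $x_2$ and fixes $x_3,\dots,x_n$, the lexicographic identification $x_i\leftrightarrow i$ turns it into the transposition $(1\,2)\in\Sym(X_n)$, which has sign $-1$. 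Hence $\mathrm{sgn}(\dot{(x_1,x_2)})\neq 0$, so its image generates $V_n^{\ab}\cong\Z/2$.

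The only genuinely technical ingredient is the fact, used in the odd case, that $\mathrm{sgn}$ is a well-defined group homomorphism when $n$ is odd — that the sign of the associated permutation is independent of the chosen pair of $n$-adic partition sets and is multiplicative. I expect this to be the main obstacle, although it is classical (it is implicit in Higman's description of $V_n^{+}$). I would verify it by checking invariance under passage to a common refinement: replacing a word $w$ by $wx_1,\dots,wx_n$ on both the domain and range side of a representing bijection replaces a single point of the permutation by a consecutive block of $n$ points on both sides, and a short inversion count shows this changes the number of inversions by a multiple of $n-1$, hence by an even number precisely because $n$ is odd; multiplicativity then follows by representing two elements on compatible (commonly refined) partition sets so that the permutation of the product is the product of the permutations, where $\mathrm{sgn}$ is multiplicative. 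Everything else in the argument is immediate from the simplicity statements already in hand.
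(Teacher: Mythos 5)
Your argument is correct and follows exactly the route the paper itself relies on: the paper states this theorem without proof, as a consequence of the immediately preceding discussion of Higman's simplicity results and the sign homomorphism for odd $n$, and your two-case analysis (perfectness in the even case; $[V_n,V_n]=\ker(\mathrm{sgn})$ plus $\mathrm{sgn}(\dot{(x_1,x_2)})=-1$ in the odd case) fills in precisely those details. Your sketch of why the sign map is well defined under refinement (the inversion count changes by a multiple of $n-1$) is also the standard and correct justification.
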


\noindent We will also make use of the subgroup $E_n \leq V_n$ of volume-preserving homeomorphisms of $\mathfrak{C}_n$.

\begin{lemma}\label{lem:level-pres}
The group $E_n$ is locally finite.
\end{lemma}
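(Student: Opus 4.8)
The plan is to exhibit $E_n$ as an increasing union of finite subgroups, each isomorphic to a finite symmetric group; local finiteness then follows at once, since any finite subset of $E_n$ lands in one of them. Here $\vol$ is the standard measure on $\mathfrak{C}_n$, normalised so that $\vol(w\mathfrak{C}_n) = n^{-|w|}$ for every $w \in X_n^{\ast}$ (with $|w|$ the length of $w$), and $E_n = \Set{\gamma \in V_n}{\gamma_{\ast}\vol = \vol}$.

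The first --- and essentially only substantive --- point is that volume preservation rigidifies word lengths. If $\gamma \in E_n$ is represented, as in Definition~\ref{def:V_n}, by an $n$-adic partition set $A$ and a map $f \colon A \to X_n^{\ast}$ with $\gamma(w\xi) = f(w)\xi$, then $\gamma$ restricts to a homeomorphism $w\mathfrak{C}_n \to f(w)\mathfrak{C}_n$, so $n^{-|w|} = \vol(w\mathfrak{C}_n) = \vol(f(w)\mathfrak{C}_n) = n^{-|f(w)|}$, whence $|f(w)| = |w|$ for all $w \in A$. Next, for $m \in \N$ let $E_n^{(m)}$ be the set of $\gamma \in V_n$ such that $\gamma(w\xi) = \pi(w)\xi$ for all $w \in X_n^{m}$, $\xi \in \mathfrak{C}_n$, for some $\pi \in \Sym(X_n^{m})$. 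A short computation gives $(\gamma_1\gamma_2)(w\xi) = \pi_1(\pi_2(w))\xi$ and $\gamma_1^{-1}(w\xi) = \pi_1^{-1}(w)\xi$ on $X_n^{m}$, so $E_n^{(m)}$ is a subgroup of $V_n$ and $\gamma \mapsto \pi$ is an isomorphism $E_n^{(m)} \xrightarrow{\sim} \Sym(X_n^{m}) \cong \Sym(n^{m})$; in particular $E_n^{(m)}$ is finite. Clearly $E_n^{(m)} \subseteq E_n$, since its elements preserve $\vol$, and $E_n^{(m)} \subseteq E_n^{(m+1)}$, since a permutation of $X_n^{m}$ extends to one of $X_n^{m+1}$ acting on initial length-$m$ subwords.

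It then remains to show $E_n = \bigcup_{m \in \N} E_n^{(m)}$, and only the inclusion $\subseteq$ needs an argument. Given $\gamma \in E_n$ with data $(A,f)$ as above, put $m \defeq \max_{w \in A}|w|$. For $w \in A$ and $v \in X_n^{\,m-|w|}$ one has $wv \in X_n^{m}$ and $\gamma(wv\xi) = f(w)v\xi$ with $f(w)v \in X_n^{m}$ by the length-rigidity just observed; as $(w,v)$ varies the words $wv$ exhaust $X_n^{m}$, so $\gamma$ maps each length-$m$ cylinder onto a length-$m$ cylinder, i.e.\ $\gamma \in E_n^{(m)}$. Finally, any $\gamma_1,\dots,\gamma_k \in E_n$ lie in a common $E_n^{(m)}$ (take $m$ the maximum of their levels), so $\langle\gamma_1,\dots,\gamma_k\rangle \leq E_n^{(m)}$ is finite; hence $E_n$ is locally finite. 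Indeed $E_n = \varinjlim_m \Sym(n^{m})$ along the embeddings above.

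I expect no real obstacle: the argument is bookkeeping organised around the single key fact that a volume-preserving element of $V_n$ cannot send a cylinder to one of a different depth. It is exactly this that allows one to refine an arbitrary representation to a common level $X_n^{m}$ and identify the ambient finite subgroup with $\Sym(n^{m})$; the identification of the level-$m$ elements of $V_n$ with $\Sym(n^{m})$ and the compatibility of these identifications as $m$ grows are then routine.
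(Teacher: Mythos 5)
Your proof is correct and follows essentially the same route as the paper: reduce a volume-preserving element to a representation on a common level $X_n^{m}$ (using that volume preservation forces $|f(w)|=|w|$) and embed the resulting group into the finite symmetric group on the level-$m$ cylinders. You simply make explicit the length-rigidity step and the direct-limit structure $E_n=\bigcup_m E_n^{(m)}$ that the paper leaves implicit.
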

\begin{proof}
Let $\Delta=\{\alpha_1, \dots, \alpha_d\}$ be a finite collection of volume preserving homeomorphisms.  Let $A_i \subseteq \mathfrak{C}_n$ and $f_i \colon A_i \rightarrow X_n^{\ast}$ be such that $\alpha_i$ is given by $\alpha_i(w \xi) = f_i(w) \xi$ for every $w \in A_i$. We can take a suitable refinement of $A_1, \dots, A_d$ so that, without loss of generality, we may assume that $A = A_1=\cdots=A_d=
X_n^{\ell}$ for a sufficiently large number $\ell \in \N$. In view of this, we see that the order of $\langle\alpha_i \mid i=1, \dots d \rangle$ is bounded by the order of $\Sym \Set{w\mathfrak{C}_n}{w \in X_n^{\ell}}$ which is finite.
\end{proof}

\section{Generating $V_n$ with involutions}

\noindent Let us again fix a natural number $n \geq 2$.
Recall that for every permutation $\alpha \in \Sym(X_n)$, we denote by $\dot{\alpha}$ the element in $V_n$ given by $\dot{\alpha}(x\xi) = \alpha(x)\xi$ for all $x \in X_n$ and $\xi \in \mathfrak{C}_n$.
Moreover, we fix $\sigma=(x_1,x_2) \in \Sym(X_n)$ so that $\dot{\sigma}$ is the element as shown in Figure~\ref{fig:sigma}. 
\begin{comment}
In what follows we will make use of the involutions $\varepsilon, \sigma \in \Sym(X_n)$ that are given by $\varepsilon = (x_1,x_2)$ and $\sigma(x_i) = x_{n+1-i}$ for all $x_i \in X_n$.
\end{comment}

\begin{figure}[h]
\centering
\includegraphics[scale=0.4]{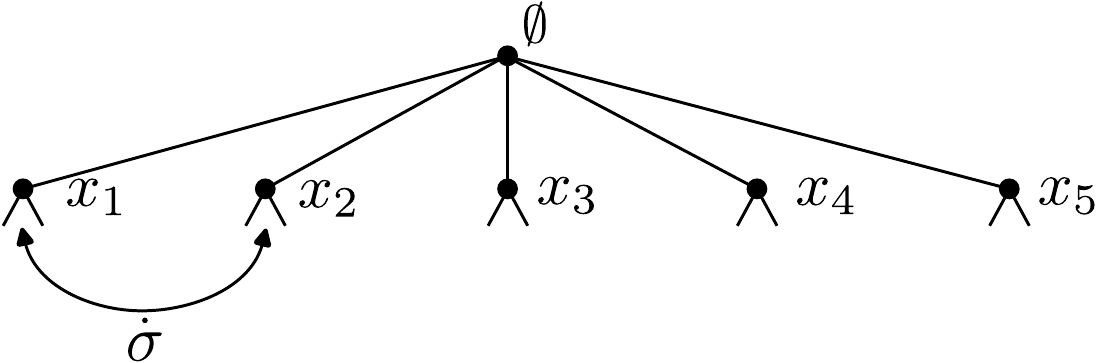}
\caption{The action of $\dot{\sigma} \in V_5$ on $\mathfrak{C}_5$.}
\label{fig:sigma}
\end{figure}

\noindent Let $\tau \in V_n$ denote the unique element with
\[
\supp(\tau)
\subseteq (\{x_1\} \times (X_n \setminus \{x_n\})) \mathfrak{C}_n
\cup (X_n \setminus \{x_1\})\mathfrak{C}_n
\]
that satisfies
\[
\tau(x_{1}x_{i}\xi) = x_{i+1}\xi
\text{ and }
\tau(x_{i+1}\xi) = x_{1}x_{i}\xi
\]
for all $1 \leq i < n$ and $\xi \in \mathfrak{C}_n$.

\begin{comment}
\noindent Using $\sigma$, we define $\tau \in V_n$ as the unique element with
\[
\supp(\tau)
\subseteq (\{x_1\} \times X_{\lceil n/2 \rceil}) \mathfrak{C}_n
\cup \sigma(X_{\lceil n/2 \rceil})\mathfrak{C}_n
\]
that satisfies
\[
\tau(x_{1}x_{i}\xi) = \sigma(x_i)\xi
\text{ and }
\tau(\sigma(x_i)\xi) = x_{1}x_{i}\xi
\]
for all $i \leq \lceil n/2 \rceil$ and $\xi \in \mathfrak{C}_n$.
\end{comment}

\begin{figure}[h]
\centering
\includegraphics[scale=0.4]{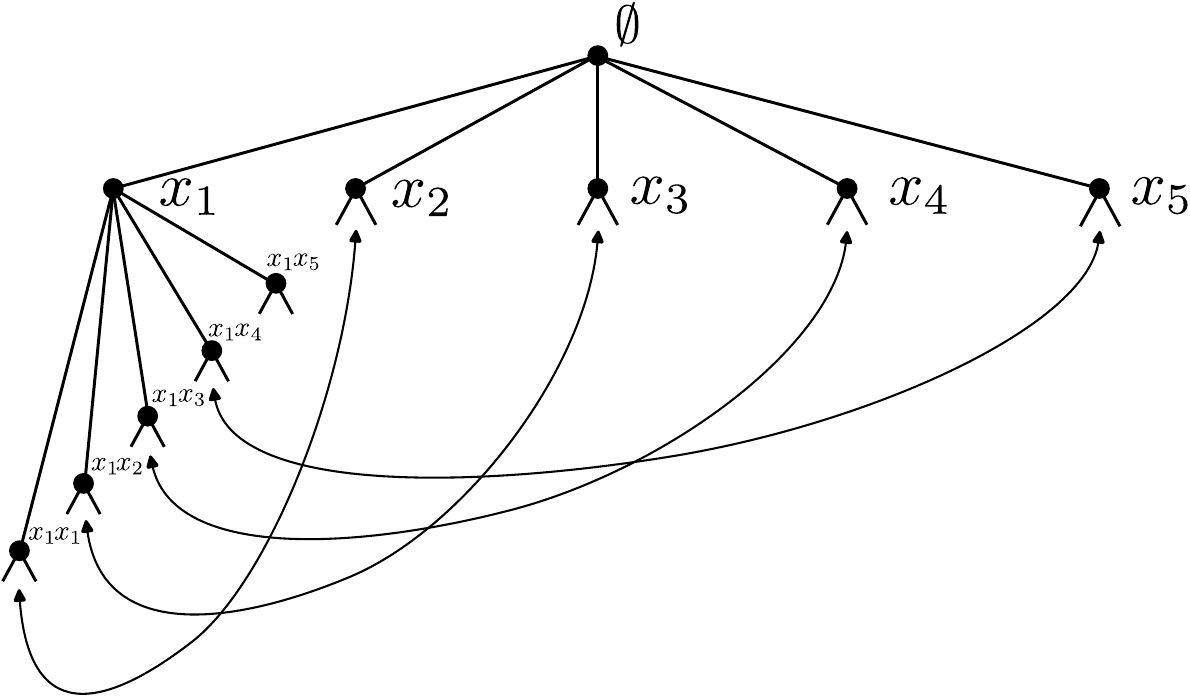}
\caption{The action of $\tau \in V_5$ on $\mathfrak{C}_5$.}
\end{figure}

\begin{comment}
\noindent Using $\sigma$, we define another involution $\tau \in V_n$ by setting
\[
\tau(x_{1}x_{i}\xi) = \sigma(x_i)\xi \text{ and } \tau(\sigma(x_i)\xi) = x_{1}x_{i}\xi
\]
for all $i \leq \lceil n/2 \rceil$, $\xi \in \mathfrak{C}_n$ and by setting

from the generating sets $S_{\alpha}$ that appeared in the introduction.\comeddy{ToDo: Check afterwards if the sets actually appeared in the introduction.}
\begin{notation}\label{not:sigma}
Let $\sigma \in V_n$ denote the element given by $\sigma(i\xi) = (n-1-i)\xi$ for every $i \in X_n$ and every $\xi \in \mathfrak{C}_n$.
\end{notation}

\begin{notation}\label{not:tau}
Let $\tau \in V_n$ denote the element given by $\tau(0i) = n-1-i$ and $\tau(n-1-i) = 0i$ for $i \leq \lfloor n/2 \rfloor$.
\end{notation}
\end{comment}

\begin{comment}
\comeddy{To generalize $\sigma$ and $\tau$ to generators of $V_n$, it would be useful to know what kind of maximal subgroups of $V_n$ exist that contain the group $\bigoplus \limits_{i=0}^{n-1} V_n@i$.
Is it true that the only maximal subgroup that contains $\bigoplus \limits_{i=0}^{n-1} V_n@i$ is given by $\bigoplus \limits_{i=0}^{n-1} V_n@i \rtimes \Sym(\{0,\ldots,n-1\})$?}
\end{comment}

\noindent The generating set $S_{\alpha}$ of $V_n$ that we want to construct consists of the elements $\dot{\sigma},\tau$ and one further involution $s_{\alpha}$, where $\alpha$ is a sequence in $V_n^{\ast} \defeq \coprod \limits_{\ell \in \N_0} V_n^{\ell}$.
The length of a sequence $\alpha = (\alpha_1,\ldots,\alpha_{\ell}) \in V_n^{\ast}$ will be denoted by $|\alpha| = \ell$.

\begin{comment}
To define $s_{\alpha}$, it will be useful to work with the following notation.
%
\begin{notation}\label{not:V-star}
We write $V^{\ast} \defeq \coprod \limits_{n \in \N_0} V^n$ to denote the set of finite words over $V$.
words over $V$.
The length of a sequence $\alpha = (\alpha_0,\ldots,\alpha_{n}) \in V^{\ast}$ will be denoted by $|\alpha| = n+1$.
\end{notation}
\end{comment}

\begin{definition}\label{def:s-alpha}
For each sequence $\alpha \in V_n^{\ast}$ of length $\ell$, we define $s_{\alpha} \in V_n$ as the element that is given by
\[
s_{\alpha}(x_{1}^{k} x_{i} \xi) = 
\begin{cases}
x_{1}^{k} x_{i} \xi, \text{ if $k=0$ and $i \neq 1$}\\
x_{1}^{k} x_{i} \alpha_{k}(\xi), \text{ if } 1 \leq k \leq \ell \text{ and } i = 2\\
x_{1}^{k} x_{i} \xi, \text{ if } 1 \leq k \leq \ell \text{ and } i \geq 3\\
x_{1}^{\ell+1} \dot{\sigma}(\xi), \text{ if } k = \ell \text{ and } i = 1
\end{cases}
\]
for $ \xi \in \mathfrak{C}_n$, and $x_i \in X_n$. 
\end{definition}

\begin{figure}[h]
\centering
\includegraphics[scale=0.55]{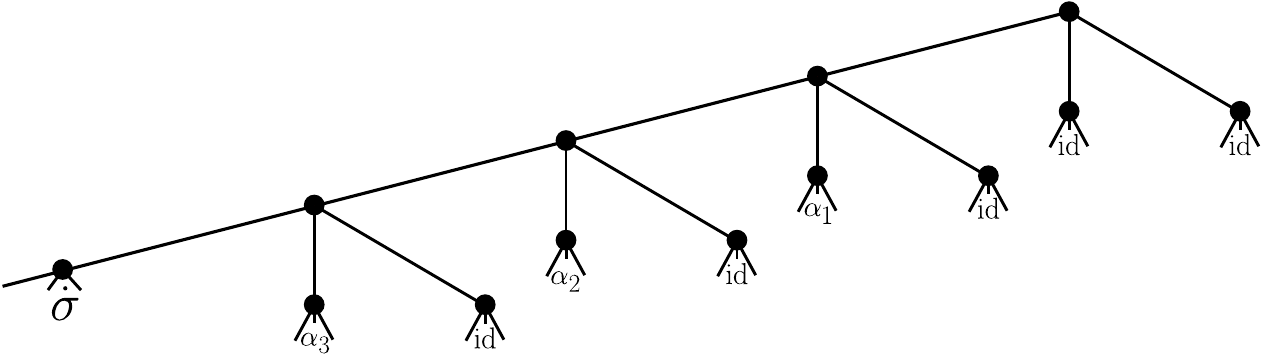}
\caption{The action of $s_{\alpha} \in V_3$ on $\mathfrak{C}_3$, where $\alpha = (\alpha_1,\alpha_2,\alpha_3)$.}
\end{figure}

\noindent Using Notation~\ref{not:gamma-vert-v}, we can write $s_{\alpha}$ as
\begin{equation}\label{eq:rewrite-s-alpha}
s_{\alpha} = x_1^{\ell+1} \dot{\sigma} \cdot \prod \limits_{k=1}^{\ell} x_1^k x_2 \alpha_{k}.
\end{equation}

\begin{comment}
$s_{\alpha}(1^{k}0 \xi) = 1^{k}0 \alpha_{k}(\xi)$ for each $1 \leq k \leq \ell$ and $s_{\alpha}(1^{\ell} \xi) = 1^{\ell} \sigma(\xi)$
\end{comment}

\noindent Note that $s_{\alpha}$ is an involution if and only if $\alpha = (\alpha_i)_{i=1}^{\ell}$ consists of elements $\alpha_i$ of order at most $2$.
Our goal is to find conditions on $\alpha$ under which the group $H_{\alpha}$ generated by $S_{\alpha} \defeq \{\dot{\sigma},\tau,s_{\alpha}\}$ coincides with $V_n$.

\begin{remark}\label{rem:inspiration-s-alpha}
The definition of $s_{\alpha}$ is inspired by the class of tree automorphisms of `spinal type' that often appear in the theory of branch groups and automata groups, see e.g.~\cite[Definition 2.1.1]{BartholdiGrigorchukSunik03} for a definition and~\cite{KionkeSchesler23amenable,KionkeSchesler22} for recent constructions in which spinal tree automorphisms played a crucial role.
\end{remark}

\begin{notation}\label{not:commutator-and-conjugate}
Let $G$ be a group and let $g,h\in G$.
We write $[g,h] = ghg^{-1}h^{-1}$ and $g^h = h^{-1}gh$ to denote the commutator of $g$ and $h$, respectively the conjugate of $g$ by $h$.
\end{notation}

\begin{definition}\label{def:commutator-stable}
A generating set $S$ of a group $G$ is said to be \emph{commutator stable} if $G$ is also generated by $\Set{[s,t]}{s,t \in S}$.
\end{definition}

\begin{lemma}\label{lem:Vn-is-gen-by-invol}
The group $V_n'$ admits a finite commutator stable generating set $S$ that consists of involutions.
\end{lemma}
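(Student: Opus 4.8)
The plan is to first reduce ``commutator stable'' to a purely combinatorial condition on $S$, and then to build such an $S$ by hand.

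\emph{Step 1 (a sufficient condition).} Recall that $V_n'$ is simple by \cite{Hig74}, hence perfect, and that it is finitely generated, being of finite index in the finitely generated group $V_n$. For a finite set $S$ of involutions generating a group $G$, let $\Gamma(S)$ be the graph with vertex set $S$ in which $s\neq t$ are joined by an edge exactly when $st$ has finite odd order. I will show that if $G$ is perfect and $\Gamma(S)$ is connected, then $S$ is commutator stable. Put $N=\langle [s,t] : s,t\in S\rangle$. Since the elements of $S$ are involutions, $[s,t]=stst=(st)^2$, so if $st$ has odd order $m$ then $st$ generates the same cyclic group as $(st)^2=[s,t]$, whence $st\in N$. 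If $s=v_0,v_1,\dots,v_k=t$ is a path in $\Gamma(S)$, then, using $v_i^2=1$, one telescopes $st=v_0v_k=(v_0v_1)(v_1v_2)\cdots(v_{k-1}v_k)\in N$. Thus $N$ contains the subgroup $H$ of all elements of $G$ expressible as a word of even length in $S$, and $H$ has index at most $2$ in $G$. As $G$ is perfect it has no subgroup of index $2$, so $H=G$ and therefore $N=G$.

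\emph{Step 2 (a finite involution generating set of $V_n'$).} Let $C\subseteq V_n$ be the family of all elements that act on some $n$-adic partition set by a product of two disjoint transpositions of cells (each transposition of $w\mathfrak{C}_n$ and $w'\mathfrak{C}_n$ given by $w\xi\leftrightarrow w'\xi$). This family is invariant under conjugation by $V_n$, is non-empty, and for $n$ odd consists of elements of sign $+1$, so $C\subseteq V_n'$. Hence $\langle C\rangle$ is a non-trivial normal subgroup of the simple group $V_n'$, so $\langle C\rangle=V_n'$; since $V_n'$ is finitely generated, some finite subset $C_0=\{a_1,\dots,a_m\}\subseteq C$ already generates $V_n'$.

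\emph{Step 3 (connecting the graph — the crux).} By Steps 1 and 2 it remains to enlarge $C_0$ to a finite involution generating set $S$ of $V_n'$ with $\Gamma(S)$ connected, and for this it is enough to prove: any two involutions $a,b\in V_n'$ can be joined by a finite sequence $a=c_0,c_1,\dots,c_r=b$ of involutions in $V_n'$ with each $c_{j-1}c_j$ of finite odd order. One then splices such a sequence between consecutive generators $a_i$ and $a_{i+1}$; the resulting finite list of involutions generates $V_n'$ and has a connected graph. The basic mechanism for a single link of prescribed odd order is: to obtain an involution $c$ with $ac$ of order $3$, set $c=ar$ where $r$ has order $3$ and is inverted by $a$; then $c^2=1$ and $ac=r$. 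Such an $r$ can be assembled from disjoint $3$-cycles, each formed from a pair of cells swapped by $a$ together with one further cell, and by choosing that further cell among cells moved by a second involution one simultaneously controls the order of the corresponding product with that involution. Iterating this along a chain, refining the underlying partitions as needed, and occasionally treating a union of several cells as a single ``point'' in order to pass between involutions whose supports have different sizes, produces the required chain; when $n$ is odd one keeps every term in $V_n'$ by always using an even number of transpositions.

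The main obstacle is exactly the partition surgery in Step 3: arranging that all of the relevant products are at once of \emph{finite odd} order while every element remains inside $V_n'$, which requires some care in the choice of cells, refinement levels, and auxiliary order-$3$ elements, and a slightly different bookkeeping for $n$ even and $n$ odd (for $n$ odd there are no full–support involutions and the sign must be tracked). Granting this, the final $S$ — namely $C_0$ together with the finitely many inserted involutions — consists of involutions, generates $V_n'$, and has connected $\Gamma(S)$, so it is commutator stable by Step 1, proving the lemma.
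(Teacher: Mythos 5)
Your Steps 1 and 2 are fine: the graph criterion (connected $\Gamma(S)$ plus perfectness implies commutator stability) is correct, as is the production of a finite involution generating set of $V_n'$ from a conjugation-invariant class of involutions via simplicity and finite generation. But Step 3, which you yourself flag as ``the crux'' and then only sketch (``iterating this along a chain, refining the underlying partitions as needed, \dots Granting this\dots''), is a genuine gap. The claim that any two involutions of $V_n'$ can be joined by a chain of involutions in $V_n'$ with all consecutive products of finite odd order is not proved: the single-link mechanism $c=ar$ with $r$ of order $3$ inverted by $a$ is fine in isolation, but you never verify that the auxiliary $3$-cycles can be chosen so that (i) $r$ is genuinely inverted by $a$ (this constrains the identifications of the three cells, not just which cells are used), (ii) the product of $c$ with the \emph{next} involution in the chain simultaneously has finite odd order, and (iii) everything stays in $V_n'$ when $n$ is odd. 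This is exactly the ``partition surgery'' you defer, so as written the lemma is not established.

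The gap is also unnecessary: there is a much shorter route, which is the one the paper takes. Fix a single involution $\alpha\in V_n'$ and some $\gamma\in V_n'$ with $s\defeq[\alpha,\alpha^{\gamma}]\neq\id$ (such $\gamma$ exists, else the normal closure of $\alpha$ would be abelian). By simplicity the conjugacy class of $s$ generates $V_n'$, and by finite generation finitely many conjugates $s^{g}$, $g\in A$, already suffice. Then $S=\Set{\alpha^{g}}{g\in A}\cup\Set{\alpha^{\gamma g}}{g\in A}$ consists of involutions and satisfies $[\alpha^{g},\alpha^{\gamma g}]=s^{g}$, so the commutators of pairs from $S$ already generate $V_n'$; in particular $S$ generates and is commutator stable. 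This replaces your entire graph-connectivity and chain-building apparatus with one conjugation identity. If you want to salvage your approach, you would need to actually carry out the cell-level constructions in Step 3, including the sign bookkeeping for odd $n$; but I would recommend the conjugation trick instead.
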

\begin{proof}
Let $\alpha \in V_n'$ be an involution, e.g.\ $\alpha = x_1 \dot{\sigma} \cdot x_2 \dot{\sigma}$, and let $\gamma \in V_n'$ be such that $s \defeq [\alpha,\alpha^{\gamma}] \neq \id$.
Since $V_n'$ is a simple group, see e.g.~\cite[Theorem 5.4]{Hig74}, it follows that $V_n'$ is generated by the conjugacy class $C$ of $s$.
Using that $V_n$, and hence $V_n'$, is finitely generated, we deduce that $V_n'$ admits a finite subset $A$ such that $\Set{s^g}{g \in A}$ generates $V_n'$.
Now the claim follows by setting $S = \Set{\alpha^g}{g \in A} \cup \Set{\alpha^{\gamma g}}{g \in A}$.
\begin{comment}
From~\cite{???} we know that $V_n^{\ab}$ is generated by $\sigma$.\comeddy{Xiaolei, do you know a source for the latter statement?}
Let $\gamma \in V_n$ be such that $\sigma$ and $\sigma^{\gamma}$ do not commute.
\end{comment}
\end{proof}

\begin{comment}
\noindent In what follows we are going to provide sufficient conditions that guarantee that $S_{\alpha}$ is a generating set of $V$.
Let us start by making some calculations.
\end{comment}

\begin{comment}
\begin{lemma}\label{lem:sigma-tau}
The element $t \defeq \sigma \tau$ is given by $t(0\xi) = 01\xi$, $t(10\xi) = 00\xi$, $t(11\xi) = 1\xi$.
\end{lemma}
\begin{proof}

\end{proof}
\end{comment}

\noindent Note that the element $t \defeq \dot{\sigma} \tau$ satisfies
\[
t(x_1x_1\xi) = \sigma (x_2\xi) = x_1\xi
\]
for every $\xi \in \mathfrak{C}_n$ and hence acts as a `translation' along the infinite word $x_1^{\infty} \in \mathfrak{C}_n$.
As a consequence we obtain
\begin{equation}\label{eq:conj-general-by-t}
(x_1^k\gamma)^t = x_1^{k+1}\gamma
\end{equation}
for every $k \in \N$ and every $\gamma \in V_n$.

\begin{definition}\label{def:udp}
Let $I$ be a subset of $\N$ and let $\mathcal{P}_2(I)$ denote the set of $2$-element subsets of $I$.
We say that $I$ has the \emph{unique difference property} if the map
\[
d \colon \mathcal{P}_2(I) \rightarrow \N,\ \{i,j\} \mapsto |i-j|
\]
is injective.
\end{definition}

\begin{remark}\label{rem:existence-udp}
Note that for every $N \in \N_0 \cup \{\infty\}$, there is a subset $I \subseteq \N$ of cardinality $N$ that satisfies the unique difference property.
For example every subset of $\Set{2^i}{i \in \N}$ has this property.
\end{remark}

\noindent Let $\alpha = (\alpha_1,\ldots,\alpha_{\ell}) \in V_n^{\ast}$.
By combining~\eqref{eq:rewrite-s-alpha} with~\eqref{eq:conj-general-by-t} we obtain
\begin{equation}\label{eq:s-alpha-to-t}
s_{\alpha}^{t^k}
= (x_1^{\ell+1} \dot{\sigma})^{t^k} \cdot (\prod \limits_{i=1}^{\ell} x_1^i x_2 \alpha_{i})^{t^k}
= x_1^{\ell+k+1} \dot{\sigma} \cdot \prod \limits_{i=1}^{\ell} x_1^{i+k} x_2 \alpha_{i}
\end{equation}
for every $k \in \N_0$.
In view of~\eqref{eq:s-alpha-to-t}, we can easily deduce the following.

\begin{lemma}\label{lem:commutator-trick}
Let $\alpha = (\alpha_1,\ldots,\alpha_{\ell}) \in V_n^{\ast}$.
Suppose that there is a number $k \in \N_0$ such that $\alpha_{i} = \id$ for every $\ell-k+1 \leq i \leq \ell$.
Then
\[
[s_{\alpha},s_{\alpha}^{t^k}]
= x_1^{\ell+1} [\dot{\sigma},x_1^k \dot{\sigma}]
\cdot \prod \limits_{i=k+1}^{\ell} x_1^i x_2[\alpha_i,\alpha_{i-k}].
\]
\end{lemma}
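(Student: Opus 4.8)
The plan is a direct computation using the two formulas~\eqref{eq:rewrite-s-alpha} and~\eqref{eq:s-alpha-to-t}, exploiting that the factors involved have disjoint supports indexed by the powers of $x_1$, so that conjugation and multiplication interact in a controlled way. Concretely, write $s_\alpha = x_1^{\ell+1}\dot\sigma \cdot \prod_{i=1}^\ell x_1^i x_2\alpha_i$ and, by~\eqref{eq:s-alpha-to-t}, $s_\alpha^{t^k} = x_1^{\ell+k+1}\dot\sigma \cdot \prod_{i=1}^\ell x_1^{i+k}x_2\alpha_i$. First I would record the key structural observation: the element $x_1^m x_2\gamma$ has support inside $x_1^m x_2\mathfrak C_n$, and for distinct pairs $(m)$ these supports are pairwise disjoint as long as we are comparing words of the form $x_1^m x_2 \cdots$; moreover $x_1^{\ell+1}\dot\sigma$ has support inside $x_1^{\ell+1}\mathfrak C_n$, which meets $x_1^{\ell+k+1}\mathfrak C_n$ (the second "$\dot\sigma$" block) but is disjoint from all the $x_1^i x_2$ and $x_1^{i+k}x_2$ blocks provided $i\le \ell$ and $i+k\le\ell$ — and this is exactly where the hypothesis $\alpha_i=\id$ for $\ell-k+1\le i\le\ell$ is used, since it kills the factors $x_1^i x_2\alpha_i$ with $i>\ell-k$, i.e.\ precisely those whose index $i$ could collide with an index $i'+k$ coming from $s_\alpha^{t^k}$ with $i'+k>\ell$. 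After this reduction both products run only over indices in a range where the "shifted" and "unshifted" blocks either coincide in base word (when $i$ from one equals $i'+k$ from the other, i.e.\ $i' = i-k$) or are disjoint.

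Next I would compute $[s_\alpha, s_\alpha^{t^k}] = s_\alpha s_\alpha^{t^k} s_\alpha^{-1} s_\alpha^{-t^k}$ by collecting terms block by block. Because commuting elements with disjoint support multiply "coordinatewise", the commutator factors as a product over the base words $x_1^m$ (for the $\dot\sigma$ part) and $x_1^m x_2$ (for the $\alpha$ part). On the $\dot\sigma$ blocks one gets $x_1^{\ell+1}[\dot\sigma, x_1^k\dot\sigma]$: indeed $s_\alpha$ contributes $x_1^{\ell+1}\dot\sigma$ and $s_\alpha^{t^k}$ contributes $x_1^{\ell+k+1}\dot\sigma = x_1^{\ell+1}(x_1^k\dot\sigma)$ using Notation~\ref{not:gamma-vert-v}, and since the two live in the nested cylinders $x_1^{\ell+1}\mathfrak C_n \supseteq x_1^{\ell+k+1}\mathfrak C_n$ their commutator is supported in $x_1^{\ell+1}\mathfrak C_n$ and equals $x_1^{\ell+1}[\dot\sigma, x_1^k\dot\sigma]$ — here one uses that $x_1^{\ell+1}(-)$ is an injective group homomorphism $V_n\to V_n$. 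On the $x_1^i x_2$ block with $k+1\le i\le\ell$: the factor $x_1^i x_2\alpha_i$ from $s_\alpha$ and the factor $x_1^i x_2\alpha_{i-k}$ from $s_\alpha^{t^k}$ (the latter being the $i' = i-k$ term, legitimate since $1\le i-k$) combine to $x_1^i x_2[\alpha_i, \alpha_{i-k}]$ after also cancelling against the corresponding inverse blocks; for $1\le i\le k$ the block $x_1^i x_2\alpha_i$ from $s_\alpha$ has no shifted partner (since it would need $i' = i-k\le 0$) and cancels directly with its own inverse, contributing nothing, which is why the product in the statement starts at $i = k+1$.

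The one genuinely fiddly point — and the place I expect to spend the most care — is bookkeeping the order of the factors and verifying that the relevant blocks really do commute so that the rearrangement into the stated product is valid; in particular one must check that $x_1^{\ell+1}[\dot\sigma, x_1^k\dot\sigma]$ commutes with each $x_1^i x_2[\alpha_i,\alpha_{i-k}]$ (their supports lie in $x_1^{\ell+1}\mathfrak C_n$ versus $x_1^i x_2\mathfrak C_n$ with $i\le\ell$, hence disjoint since $x_1^{\ell+1}$ is not an initial subword of $x_1^i x_2$), and that the various $x_1^i x_2$ blocks for different $i$ commute with each other. Once disjointness of supports is established in each case, all the needed commutations are automatic, the inverses $s_\alpha^{-1}$ and $s_\alpha^{-t^k}$ distribute over the blocks (reversing order within each block, but this is harmless after the disjoint-support factorization), and collecting everything yields exactly
\[
[s_\alpha, s_\alpha^{t^k}] = x_1^{\ell+1}[\dot\sigma, x_1^k\dot\sigma]\cdot \prod_{i=k+1}^\ell x_1^i x_2[\alpha_i,\alpha_{i-k}],
\]
as claimed. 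I would present the support-disjointness lemma first as a short sublemma, then do the block-by-block collection in one clean pass.
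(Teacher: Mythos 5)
Your proposal is correct and follows essentially the same route as the paper: rewrite $s_\alpha$ and $s_\alpha^{t^k}$ via~\eqref{eq:rewrite-s-alpha} and~\eqref{eq:s-alpha-to-t}, use the hypothesis to kill exactly the shifted factors $x_1^{i+k}x_2\alpha_i$ with $i+k\ge\ell+1$ (the only ones whose support would land inside $x_1^{\ell+1}\mathfrak{C}_n$ and spoil commutation with $x_1^{\ell+1}\dot\sigma$), and then split the commutator block by block using disjointness of supports. The paper's proof is exactly this computation, just written more tersely.
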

\begin{proof}
Since $\alpha_{i} = 1$ for every $\ell-k+1 \leq i \leq \ell$, it follows that $x_1^{i+k} x_2 \alpha_{i} = \id$ if $i+k \geq \ell+1$.
Thus $x_1^{\ell+1} \dot{\sigma}$ commutes with $x_1^{i+k} x_2 \alpha_{i}$ for every $i$ and we can apply~\eqref{eq:s-alpha-to-t} to deduce that
\begin{align*}
[s_{\alpha},s_{\alpha}^{t^k}]
&= [x_1^{\ell+1} \dot{\sigma} \cdot \prod \limits_{i=1}^{\ell} x_1^i x_2 \alpha_{i},
x_1^{\ell+k+1} \dot{\sigma} \cdot \prod \limits_{i=1}^{\ell} x_1^{i+k} x_2 \alpha_{i}]\\
&= [x_1^{\ell+1} \dot{\sigma},x_1^{\ell+k+1} \dot{\sigma}] \cdot
[\prod \limits_{i=1}^{\ell} x_1^i x_2 \alpha_{i}, \prod \limits_{i=1}^{\ell} x_1^{i+k} x_2 \alpha_{i}]\\
&= x_1^{\ell+1} [\dot{\sigma},x_1^k \dot{\sigma}]
\cdot \prod \limits_{i=k+1}^{\ell} x_1^i x_2[\alpha_i,\alpha_{i-k}].
\end{align*}
\end{proof}

\begin{comment}
\begin{notation}\label{not:}
For each $\alpha \in V_n^{\ast}$ and each $N \in \N$ we write $H_{\alpha,N}$ to denote the subgroup of $V_n$ that is generated by $S_{\alpha,N} \defeq \{\sigma,\tau,s_{E_N(\alpha)}\}$.
\end{notation}
\end{comment}

\subsection*{Proving the main theorem}
For the rest of the section we fix a sequence $\alpha = (\alpha_1,\ldots,\alpha_{\ell}) \in V_n^{\ast}$ such that
\begin{enumerate}
\item $\{\alpha_1,\ldots,\alpha_{\ell}\}$ is a commutator stable generating set of $V_n'$,
\item each $\alpha_i$ is of order at most $2$,
\item $I \defeq \Set{1 \leq i \leq \ell}{\alpha_i \neq \id}$ satisfies the unique difference property,
\item $\alpha_i = \id$ for every $i \leq N \defeq \max \Set{\abs{i-j}}{\{i,j\} \in \mathcal{P}_2(I)}$,
\item $\alpha_{\ell-i} = \id$ for every $0 \leq i \leq N$.
\end{enumerate}

\noindent The existence of such a sequence $\alpha$ is guaranteed by Lemma~\ref{lem:Vn-is-gen-by-invol} and Remark~\ref{rem:existence-udp}.
We will show that the group $H_{\alpha}$ that is generated by $S_{\alpha} = \{\dot{\sigma},\tau,s_{\alpha}\}$ coincides with $V_n$.
Let us start with some auxiliary observations.
Recall that we write $E_n$ to denote the subgroup of volume-preserving homeomorphisms of $V_n$.

\begin{lemma}\label{lem:isolating-j-i-gamma}
Let $i,j \in I$ be numbers with $i < j$.
There is
\begin{comment}
a number $k_{i,j} \in \Z$ with $\abs{k_{i,j}} \leq N$ and
\end{comment}
an element $\gamma_{i,j} \in E_n$ such that
\[
[s_{\alpha},s_{\alpha}^{t^{j-i}}]
= x_1^{\ell+1} \gamma_{i,j}
\cdot x_1^j x_2[\alpha_j,\alpha_{i}].
\]
\end{lemma}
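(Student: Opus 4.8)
The plan is to specialize Lemma~\ref{lem:commutator-trick} to the shift amount $k = j-i$ and then use the unique difference property together with the vanishing hypotheses (4) and (5) on $\alpha$ to collapse all but one factor of the product into a volume-preserving element. First I would check that the hypothesis of Lemma~\ref{lem:commutator-trick} is satisfied for $k = j-i$: since $i < j$ are both in $I$ and $j - i \leq N$ by the definition of $N$, condition (5) gives $\alpha_{\ell - m} = \id$ for all $0 \leq m \leq N$, hence in particular $\alpha_m = \id$ for $\ell - k + 1 \leq m \leq \ell$. So Lemma~\ref{lem:commutator-trick} applies and yields
\[
[s_{\alpha},s_{\alpha}^{t^{j-i}}]
= x_1^{\ell+1} [\dot{\sigma},x_1^k \dot{\sigma}]
\cdot \prod \limits_{m=k+1}^{\ell} x_1^m x_2[\alpha_m,\alpha_{m-k}].
\]

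Next I would analyze which factors $[\alpha_m, \alpha_{m-k}]$ in this product are nontrivial. A factor can be nontrivial only if both $m \in I$ and $m - k \in I$, i.e.\ only if $\{m, m-k\}$ is a pair in $\mathcal{P}_2(I)$ with difference $k = j - i$. By the unique difference property (condition (3)), the pair $\{i,j\}$ is the \emph{only} such pair, so the single surviving index is $m = j$, giving the factor $x_1^j x_2 [\alpha_j, \alpha_i]$. All other factors in the product, as well as the term $x_1^{\ell+1}[\dot\sigma, x_1^k\dot\sigma]$, are supported on $x_1^m\mathfrak{C}_n$ for various $m$ (and correspond to permutations of level sets), so I would collect everything except $x_1^j x_2[\alpha_j,\alpha_i]$ into a single element and call it $\gamma_{i,j}$, after conjugating out the leading $x_1^{\ell+1}$; that is, I would write the product as $x_1^{\ell+1}\gamma_{i,j} \cdot x_1^j x_2[\alpha_j,\alpha_i]$. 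One should double-check that the remaining factors genuinely commute past $x_1^j x_2[\alpha_j,\alpha_i]$ so that it can be pulled to the right — this follows because distinct $x_1^m x_2$-type elements have disjoint supports, and $[\dot\sigma, x_1^k\dot\sigma]$ is supported away from $x_1^j x_2 \mathfrak{C}_n$ since $j > k$.

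The only substantive point is verifying that $\gamma_{i,j} \in E_n$, i.e.\ that it is volume-preserving. Each factor $x_1^m x_2 [\alpha_m, \alpha_{m-k}]$ is a commutator localized inside the clopen set $x_1^m x_2 \mathfrak{C}_n$; conjugation by elements of $V_n$ and the localization operation $w\gamma$ are isometries for the natural measure when restricted appropriately, but more simply, commutators $[\alpha_m,\alpha_{m-k}]$ lie in $V_n'$, and one checks directly from the tree-pair picture that any element of the form $x_1^m x_2 \delta$ preserves the volume of $\mathfrak{C}_n$ (it only moves mass around within $x_1^m x_2\mathfrak{C}_n$ via $\delta$, and $\delta$ itself being a commutator has sign zero / preserves levels after refinement). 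Likewise $[\dot\sigma, x_1^k\dot\sigma]$ is a commutator of level-preserving elements hence lies in $E_n$. Since $E_n$ is a subgroup, the product $\gamma_{i,j}$ of all these factors lies in $E_n$. I expect the main obstacle to be purely bookkeeping: being careful that the index $m=j$ is the unique survivor (which needs $i,j \in I$ and the unique difference property, and also that $j \geq k+1$, equivalently $i \geq 1$, which holds), and that the remaining pieces really can be grouped as a single left factor $x_1^{\ell+1}\gamma_{i,j}$ with $\gamma_{i,j}$ independent of the displayed factor — there is no deep content beyond Lemma~\ref{lem:commutator-trick} and the combinatorics of condition (3).
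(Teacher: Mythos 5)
Your argument is correct and is essentially identical to the paper's proof: apply Lemma~\ref{lem:commutator-trick} with $k=j-i$ (using $j-i\leq N$ and condition (5) to verify its hypothesis), observe via the unique difference property that $m=j$ is the only index with both $m,m-k\in I$, and take $\gamma_{i,j}=[\dot{\sigma},x_1^{j-i}\dot{\sigma}]\in E_n$. Your side discussion arguing that the factors $x_1^m x_2[\alpha_m,\alpha_{m-k}]$ are volume-preserving because they are commutators is both unnecessary (you have already shown those factors are trivial, and indeed they could not be absorbed into a left factor of the form $x_1^{\ell+1}\gamma_{i,j}$ if they were not) and not a valid argument in general, since an element of $V_n'$ need not lie in $E_n$; but this does not affect the correctness of the main line of reasoning.
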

\begin{proof}
Let $k \defeq j-i$.
From our assumption on $\alpha$ we know that $N \geq k$ and that $\alpha_{i} = \id$ for every $\ell-N \leq i \leq \ell$.
In particular we have $\alpha_{i} = \id$ for every $\ell-k+1 \leq i \leq \ell$ so that we can apply Lemma~\ref{lem:commutator-trick} to deduce that
\begin{align*}
[s_{\alpha},s_{\alpha}^{t^k}]
&= x_1^{\ell+1} [\dot{\sigma},x_1^k \dot{\sigma}]
\cdot \prod \limits_{m=k+1}^{\ell} x_1^m x_2[\alpha_m,\alpha_{m-k}]\\
&= x_1^{\ell+1} [\dot{\sigma},x_1^{j-i} \dot{\sigma}]
\cdot x_1^{j} x_2[\alpha_j,\alpha_{i}] \cdot \prod_{\substack{m=j-i+1\\ m \neq j}}^{\ell} x_1^m x_2[\alpha_m,\alpha_{m-(j-i)}]\\
&= x_1^{\ell+1} \gamma_{i,j} \cdot x_1^j x_2[\alpha_j,\alpha_{i}],
\end{align*}
where the latter equality follows by setting $\gamma_{i,j} = [\dot{\sigma},x_1^{j-i} \dot{\sigma}]$ and by applying our assumption that $I$ satisfies the unique difference property.
\end{proof}

\begin{lemma}\label{lem:simple-times-finite}
Let $Q$ be an infinite simple group, let $F$ be a finite group and let $G$ be a subgroup of $Q \times F$.
If $G$ is subdirect in $Q \times F$, then $G$ coincides with $Q \times F$.
\end{lemma}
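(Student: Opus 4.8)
The plan is to use the subdirect condition to pin down $G$ as the graph of a surjection. Recall that $G$ being subdirect in $Q \times F$ means that the two projections $p_Q \colon G \to Q$ and $p_F \colon G \to F$ are both surjective. First I would consider $K \defeq G \cap (Q \times \{1\})$, which I identify with a subgroup of $Q$; it is normal in $G$ since $Q \times \{1\}$ is normal in $Q \times F$. The key observation is that $K$ is in fact normal in all of $Q$: indeed, since $p_Q(G) = Q$, for any $q \in Q$ there is some $(q,f) \in G$, and conjugating an element of $K$ by $(q,f)$ lands back in $K$ (as $K \trianglelefteq G$) while simultaneously realizing conjugation by $q$ in the first coordinate. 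Hence $K$ is a normal subgroup of the simple group $Q$, so $K = \{1\}$ or $K = Q$.

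Next I would rule out $K = Q$: if $Q \times \{1\} \subseteq G$, then together with surjectivity of $p_F$ we would get $G = Q \times F$, and we would be done immediately. So assume $K = \{1\}$. Then $p_Q$ restricted to $G$ is injective, hence an isomorphism $G \xrightarrow{\sim} Q$ (it is surjective by subdirectness). This exhibits $G$ as the graph $\Set{(q,\phi(q))}{q \in Q}$ of a homomorphism $\phi \colon Q \to F$, namely $\phi = p_F \circ (p_Q|_G)^{-1}$, and $\phi$ is surjective because $p_F$ is. But $Q$ is simple, so $\ker \phi$ is $\{1\}$ or $Q$; since $F$ is finite and $Q$ is infinite, $\phi$ cannot be injective, so $\ker \phi = Q$, i.e.\ $\phi$ is trivial. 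This forces $F = \im \phi = \{1\}$, whence $G = Q \times \{1\} = Q \times F$.

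I expect the only mild subtlety is the argument that $K \defeq G \cap (Q \times \{1\})$ is normal in $Q$ (not merely in $G$); this is where subdirectness is genuinely used, and it should be spelled out carefully via the conjugation computation above. Everything else is a routine chase through the definitions, and the finiteness of $F$ versus infiniteness of $Q$ delivers the final contradiction in the graph case. In fact the cleanest write-up just combines the two cases: in all cases $K \in \{\{1\}, Q\}$, and one checks $G = Q \times F$ in the case $K = Q$ directly and derives a contradiction (or $F = \{1\}$, which again gives $G = Q\times F$) in the case $K = \{1\}$.
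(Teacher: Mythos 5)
Your proof is correct and follows essentially the same route as the paper: both arguments study $K = G \cap (Q \times \{1\})$, observe via surjectivity of the projection to $Q$ that $K$ is normal in $Q$, and invoke simplicity. The only difference is cosmetic: the paper dispatches the case $K = \{1\}$ immediately by noting that $K$ has finite index in the infinite group $G$ (so $K$ must be nontrivial), whereas you handle it separately via the graph of a surjection $Q \to F$; both are fine.
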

\begin{proof}
Let $G$ be subdirect in $Q \times F$.
By definition this means that the canonical projections $\pi_F \colon G \rightarrow F$ and $\pi_Q \colon G \rightarrow Q$ are surjective.
Let $K$ denote the kernel of $\pi_F$.
Then $K$ is a normal finite index subgroup of the form $K_0 \times \id$ in $G$.
In view of the surjectivity of $\pi_Q$ is follows that $G$ is infinite and that $K \cong K_0$ is a non-trivial normal subgroup of $Q$.
Since $Q$ is simple it follows that $K_0$ coincides with $Q$.
Since $G$ is subdirect this implies that $G = Q \times F$.
\end{proof}

\begin{lemma}\label{lem:containing-x_1x_2V_n}
The group $H_{\alpha}$ contains $x_1^{i_0}x_2 V_n'$, where $i_0 \defeq \max (I)$.
\end{lemma}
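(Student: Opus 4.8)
The goal is to show that $H_\alpha$ contains $x_1^{i_0} x_2 V_n'$, where $i_0 = \max(I)$. The plan is to first use the commutator trick from Lemma~\ref{lem:isolating-j-i-gamma} to produce, for each $i \in I$ with $i < i_0$, an element of $H_\alpha$ of the form $x_1^{\ell+1}\gamma_{i,i_0} \cdot x_1^{i_0} x_2 [\alpha_{i_0},\alpha_i]$. The key point is that $x_1^{\ell+1}\gamma_{i,i_0}$ and $x_1^{i_0} x_2[\alpha_{i_0},\alpha_i]$ have disjoint supports (the first is supported in $x_1^{\ell+1}\mathfrak{C}_n$ since $\gamma_{i,i_0} \in E_n$, while the second is supported in $x_1^{i_0}x_2\mathfrak{C}_n$, and $i_0 \le \ell$ so these are disjoint provided $i_0 < \ell+1$, which holds since $\alpha_\ell = \id$ by condition (5), so $i_0 \le \ell - 1$).

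Next I would exploit the subdirect product structure. Consider the subgroup $W \le H_\alpha$ generated by all the commutators $[s_\alpha, s_\alpha^{t^{i_0-i}}]$ for $i \in I \setminus \{i_0\}$, together with appropriate conjugates. Since $\{\alpha_1,\dots,\alpha_\ell\}$ is a \emph{commutator stable} generating set of $V_n'$ (condition (1)), the elements $[\alpha_{i_0},\alpha_i]$, as $i$ ranges suitably and after taking further conjugates within $H_\alpha$ (e.g.\ using that $x_1^{i_0}x_2 V_n'$-conjugations can be realized), generate $x_1^{i_0}x_2 V_n'$. More precisely: let $G$ be the subgroup of $H_\alpha$ generated by the $[s_\alpha, s_\alpha^{t^{i_0-i}}]$, and project to the two disjoint-support factors; one gets a subgroup of (something supported on $x_1^{\ell+1}\mathfrak{C}_n$, which factors through $E_n$ hence a finite group after bounding levels) $\times\, (x_1^{i_0}x_2 V_n')$. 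Showing this projection surjects onto the $V_n'$-factor uses commutator stability; then Lemma~\ref{lem:simple-times-finite} (with $Q = x_1^{i_0}x_2 V_n' \cong V_n'$ infinite simple and $F$ the finite group on the $x_1^{\ell+1}$ side) forces $G$ to contain the full product, in particular $x_1^{i_0}x_2 V_n' = \id \times Q$.

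There is a subtlety: to apply Lemma~\ref{lem:simple-times-finite} I need the ambient group to genuinely be $Q \times F$ with $F$ finite. The $x_1^{\ell+1}\gamma_{i,i_0}$ pieces all lie in $x_1^{\ell+1} E_n$-flavored elements — but actually $\gamma_{i,i_0} = [\dot\sigma, x_1^{j-i}\dot\sigma]$ is a fixed finite-order element supported near the top, so the group they generate together with conjugates is contained in a single finite group by Lemma~\ref{lem:level-pres} (local finiteness of $E_n$), once we check all relevant elements are volume-preserving and supported on a common finite level. So the correct statement is that $G$ embeds subdirectly into $F \times (x_1^{i_0}x_2 V_n')$ for a suitable finite $F$.

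The main obstacle I anticipate is verifying the surjectivity onto the $V_n'$-factor: one must check that conjugating the elements $x_1^{i_0}x_2[\alpha_{i_0},\alpha_i]$ within $H_\alpha$ — using $s_\alpha$ itself and the translation $t$ — produces enough conjugates of the $[\alpha_{i_0},\alpha_i]$ to invoke commutator stability, without accidentally moving support off of $x_1^{i_0}x_2\mathfrak{C}_n$. This is where the unique difference property (condition (3)) and the padding conditions (4)–(5) do real work, ensuring the commutators isolate cleanly and that the relevant conjugations by $s_\alpha$ restrict to the block $x_1^{i_0}x_2\mathfrak{C}_n$ as honest $V_n'$-conjugations. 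Once that is in place, Lemmas~\ref{lem:isolating-j-i-gamma}, \ref{lem:simple-times-finite}, and \ref{lem:level-pres} assemble the result.
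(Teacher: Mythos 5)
Your overall architecture is the paper's: isolate elements of the form $x_1^{\ell+1}\gamma \cdot x_1^{i_0}x_2[\alpha_j,\alpha_i]$ via Lemma~\ref{lem:isolating-j-i-gamma}, observe the two factors have disjoint supports, bound the $x_1^{\ell+1}$-side by a finite group using Lemma~\ref{lem:level-pres}, and finish with Lemma~\ref{lem:simple-times-finite}. But there is a genuine gap at the step you yourself flag as the ``main obstacle,'' and your proposed remedy does not close it. You only produce the commutators $[\alpha_{i_0},\alpha_i]$ for $i<i_0$ at the address $x_1^{i_0}x_2$. Commutator stability (Definition~\ref{def:commutator-stable}) says that the \emph{full} set $\Set{[\alpha_i,\alpha_j]}{i,j\in I}$ generates $V_n'$; the subset of commutators having $\alpha_{i_0}$ as one entry need not generate anything close to $V_n'$. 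Your suggested fix --- taking ``further conjugates within $H_\alpha$, e.g.\ using that $x_1^{i_0}x_2 V_n'$-conjugations can be realized'' --- is circular: the availability of arbitrary $V_n'$-conjugations inside the block $x_1^{i_0}x_2\mathfrak{C}_n$ is essentially the conclusion of the lemma, and in any case conjugates of the $[\alpha_{i_0},\alpha_i]$ alone still need not yield the missing commutators $[\alpha_j,\alpha_i]$ with $i,j\neq i_0$.

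The correct (and simple) repair is the one the paper uses: apply Lemma~\ref{lem:isolating-j-i-gamma} to \emph{every} pair $i<j$ in $I$, obtaining $x_1^{\ell+1}\gamma_{i,j}\cdot x_1^{j}x_2[\alpha_j,\alpha_i]\in H_\alpha$, then conjugate by $t^{i_0-j}$ using~\eqref{eq:conj-general-by-t} to translate the second factor to the common address $x_1^{i_0}x_2$; the first factor becomes $x_1^{\ell+1+(i_0-j)}\gamma_{i,j}^{-1}$ (after inverting via $[g,h]^{-1}=[h,g]$ to get both orders), which is still of the form $x_1^{\ell+1}\gamma'$ with $\gamma'\in E_n$. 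Now all commutators $[\alpha_i,\alpha_j]$ sit at $x_1^{i_0}x_2$, commutator stability applies verbatim, and the subdirect-product argument you outline goes through. With that substitution your plan is the paper's proof; the remaining details you mention (disjointness of supports since $i_0\le\ell<\ell+1$, finiteness of the $E_n$-side) are handled correctly.
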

\begin{proof}
For any two pairwise distinct elements $i,j \in I$, let $s_{i,j} \defeq [\alpha_i,\alpha_j]$.
In the case where $i < j$ we know from Lemma~\ref{lem:isolating-j-i-gamma} that $x_1^{\ell+1} \gamma_{i,j}
\cdot x_1^j x_2 s_{j,i} \in H_{\alpha}$ for some element $\gamma_{i,j} \in E_n$.
Using the formula $[g,h]^{-1} = [h,g]$, we may therefore deduce that $x_1^{\ell+1} \gamma_{i,j}^{-1}
\cdot x_1^j x_2 s_{i,j}$ lies in $H_{\alpha}$.
In view of~\eqref{eq:conj-general-by-t} this shows that $x_1^{\ell+1+(i_0-j)} \gamma_{i,j}^{-1}
\cdot x_1^{i_0} x_2 s_{i,j}$ lies in $H_{\alpha}$.
In particular it follows that for every distinct elements $i,j \in I$ there is an element $\gamma_{i,j}' \in E_n$ with
\[
s_{i,j}' \defeq x_1^{\ell+1} \gamma_{i,j}'
\cdot x_1^{i_0} x_2 s_{i,j} \in H_{\alpha}.
\]
Note that the element $x_1^{\ell+1} \gamma_{i,j}'$ and $x_1^{i_0} x_2 s_{i,j}$ commute since $\ell+1 > i_0$.
Since $\{\alpha_1,\ldots,\alpha_{\ell}\}$, and hence $\Set{\alpha_i}{i \in I}$, is a commutator stable generating set of $V_n'$ it therefore follows that for each $\alpha \in V_n'$ there is an element $\gamma_{\alpha} \in E_n$ with $x_1^{\ell+1} \gamma_{\alpha}
\cdot x_1^{i_0} x_2 \alpha \in H_{\alpha}$.
Thus $H_{\alpha}$ contains a subdirect product of $x_1^{i_0}x_2 V_n'$ and $x_1^{\ell+1} F$, where $F$ is a finitely generated subgroup of $E_n$.
Since $F$ is finite by Lemma~\ref{lem:level-pres}, the claim now follows from Lemma~\ref{lem:simple-times-finite}.
\end{proof}

\begin{lemma}\label{lem:sum-in-V_n}
The group $H_{\alpha}$ contains $x_i V_n$ for every $x_i \in X_n$.
\end{lemma}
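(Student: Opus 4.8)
The plan is to bootstrap the conclusion of Lemma~\ref{lem:containing-x_1x_2V_n} — which puts $x_1^{i_0}x_2V_n'$ inside $H_\alpha$ — up to the full groups $x_iV_n$, using only the shift $t=\dot\sigma\tau$, the involution $\tau$, and the element $s_\alpha$. First, regarding $x_1^{i_0}x_2V_n'$ as $x_1^{i_0}(x_2V_n')$ and applying~\eqref{eq:conj-general-by-t} with $x_2\gamma$ in the role of $\gamma$, conjugation by $t$ raises the exponent of $x_1$ while conjugation by $t^{-1}$ lowers it as long as it stays $\ge1$; since $i_0\le\ell$, this already gives $x_1^kx_2V_n'\le H_\alpha$ for every $k\ge1$, in particular for $k=\ell+1$.

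The decisive step is to fill in the $x_1$-branch. By the clause $k=\ell,\,i=1$ of Definition~\ref{def:s-alpha} together with $\dot\sigma(x_1\eta)=x_2\eta$, the element $s_\alpha$ maps $x_1^{\ell+2}\mathfrak{C}_n$ onto $x_1^{\ell+1}x_2\mathfrak{C}_n$ by the prefix substitution $x_1^{\ell+2}\eta\leftrightarrow x_1^{\ell+1}x_2\eta$; such a substitution preserves the self-similar structure, so conjugating the copy $x_1^{\ell+1}x_2V_n'\le H_\alpha$ by $s_\alpha$ transports it onto $x_1^{\ell+2}V_n'\le H_\alpha$. Lowering the exponent by $t^{-(\ell+1)}$ as before yields $x_1V_n'\le H_\alpha$.

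Next I would pass from $V_n'$ to $V_n$. If $n$ is even, $V_n=V_n'$ and there is nothing to do. If $n$ is odd, one checks from Definition~\ref{def:s-alpha} that every nontrivial clause of $s_\alpha$ is supported inside $x_1\mathfrak{C}_n$, so $s_\alpha=x_1\gamma_0$ for a unique $\gamma_0\in V_n$; it suffices to show $\gamma_0\notin V_n'$, for then $\langle V_n',\gamma_0\rangle=V_n$ (index $2$) and $x_1V_n=\langle x_1V_n',s_\alpha\rangle\le H_\alpha$. Since $V_n'$ is the kernel of the sign homomorphism and $\operatorname{sign}(x_1\gamma)=\operatorname{sign}(\gamma)$ for all $\gamma$ (the permutation attached to $x_1\gamma$ being block-diagonal with an identity block), it is enough to see that $\operatorname{sign}(s_\alpha)\ne0$; and this follows from~\eqref{eq:rewrite-s-alpha} and multiplicativity, because $\dot\sigma=\dot{(x_1,x_2)}$ has nontrivial sign while each $\alpha_k\in V_n'$ has trivial sign.

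Finally, for $1\le i\le n-1$ the subgroup $x_1x_iV_n\le x_1V_n\le H_\alpha$ is carried onto $x_{i+1}V_n$ by $\tau$, since $\tau$ maps $x_1x_i\mathfrak{C}_n$ onto $x_{i+1}\mathfrak{C}_n$ via $x_1x_i\xi\leftrightarrow x_{i+1}\xi$; hence $x_2V_n,\dots,x_nV_n\le H_\alpha$, and together with $x_1V_n$ this gives the lemma. The real obstacle is in the two middle paragraphs: Lemma~\ref{lem:containing-x_1x_2V_n} only supplies the index-two subgroup $V_n'$, and it is exactly the precise shape of $s_\alpha$ — not merely that $H_\alpha$ contains involutions — that lets one both fill the $x_1$-branch by a prefix substitution and, when $n$ is odd, produce an element of odd sign supported entirely inside $x_1\mathfrak{C}_n$.
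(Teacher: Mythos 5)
Your argument is correct, and it deviates from the paper's proof in two of the middle steps while sharing the same overall skeleton (shift with $t$, fill the $x_1$-branch, upgrade $V_n'$ to $V_n$, spread with $\tau$). Where the paper first deduces that $\prod_{k=1}^{\ell} x_1^k x_2\alpha_k$ lies in $H_{\alpha}$ (each factor sitting in some $x_1^kx_2V_n'$) and cancels it against \eqref{eq:rewrite-s-alpha} to extract the explicit element $x_1^{\ell+1}\dot{\sigma}\in H_{\alpha}$, you instead conjugate $x_1^{\ell+1}x_2V_n'$ directly by $s_{\alpha}$, using that $s_{\alpha}$ restricted to $x_1^{\ell+1}\mathfrak{C}_n$ is the prefix substitution $x_1^{\ell+2}\eta\leftrightarrow x_1^{\ell+1}x_2\eta$; this lands on $x_1^{\ell+2}V_n'$ just as well and skips one intermediate deduction. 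The larger divergence is the passage from $V_n'$ to $V_n$: the paper reuses the extracted element $x_1^{\ell+1}\dot{\sigma}$ (conjugated by $t$) together with Theorem~\ref{thm:simple-commutator} to get $x_1^{\ell+2}V_n=\langle x_1^{\ell+2}V_n',x_1^{\ell+2}\dot{\sigma}\rangle$ uniformly in $n$, whereas you split into the cases $n$ even (where $V_n=V_n'$) and $n$ odd, and in the latter case argue via the sign homomorphism that $s_{\alpha}=x_1\gamma_0$ with $\gamma_0\notin V_n'$, so that $\langle x_1V_n',s_{\alpha}\rangle=x_1V_n$. That computation is right --- $\operatorname{sign}(w\gamma)=\operatorname{sign}(\gamma)$, each $\alpha_k\in V_n'$ has trivial sign, and $\dot{\sigma}=\dot{(x_1,x_2)}$ does not --- but it leans on properties of the sign map (well-definedness, multiplicativity, and $V_n'=\ker(\operatorname{sign})$ for odd $n$) that the paper only sketches in its background section, while the paper's route needs only Theorem~\ref{thm:simple-commutator} and avoids any parity case analysis. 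Both arguments are complete; yours trades one small saving at the branch-filling step for a slightly heavier final step.
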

\begin{proof}
From Lemma~\ref{lem:containing-x_1x_2V_n} we know that $H_{\alpha}$ contains $x_1^{i_0}x_2 V_n'$, where $i_0 \defeq \max(I)$.
In view of~\eqref{eq:conj-general-by-t} we see that we can conjugate $x_1^{i_0}x_2 V_n'$ with an appropriate power of $t$ to obtain $x_1^{k}x_2 V_n' \leq H_{\alpha}$ for every $k \in \N$.
In particular, it follows that $\prod \limits_{k=1}^{\ell} x_1^k x_2 \alpha_{k}$ lies in $H_{\alpha}$.
On the other hand we have $s_{\alpha} = x_1^{\ell+1} \dot{\sigma} \cdot \prod \limits_{k=1}^{\ell} x_1^k x_2 \alpha_{k} \in H_{\alpha}$ by the definition of $s_{\alpha}$ so that we can deduce that $x_1^{\ell+1} \dot{\sigma}$ lies in $H_{\alpha}$.
We can therefore conjugate $x_1^{\ell+1}x_2 V_n'$ with $x_1^{\ell+1} \dot{\sigma}$ to obtain $x_1^{\ell+2} V_n' \leq H_{\alpha}$.
Since by Theorem~\ref{thm:simple-commutator} the abelianization $V_n^{\ab}$ of $V_n$ is generated by the image of $\dot{\sigma}$, it follows that
\[
\langle x_1^{\ell+2} V_n', (x_1^{\ell+1} \dot{\sigma})^t \rangle
= \langle x_1^{\ell+2} V_n', x_1^{\ell+2} \dot{\sigma} \rangle
= x_1^{\ell+2} \langle V_n', \dot{\sigma} \rangle
= x_1^{\ell+2} V_n
\]
is a subgroup of $H_{\alpha}$.
By conjugating this subgroup with $t^{-\ell-1}$ we obtain $x_1 V_n \leq H_{\alpha}$ and hence $x_1x_i V_n \leq H_{\alpha}$ for every $x_i \in X_{n}$.
Finally, we can conjugate the groups $x_1x_i V_n$, where $i < n$, with $\tau$ to conclude that $x_iV_n \leq H_{\alpha}$ for every $x_i \in X_n$, which completes the proof.
\end{proof}

\begin{lemma}\label{lem:sym-in-V_n}
The group $H_{\alpha}$ contains $\dot{\Sym}(X_n)$.
\end{lemma}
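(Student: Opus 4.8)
The plan is to exhibit every adjacent transposition $\dot{(x_i,x_{i+1})}$, $1\le i\le n-1$, as an element of $H_\alpha$; since these transpositions generate $\dot{\Sym}(X_n)$, this suffices. Now $\dot{(x_1,x_2)}=\dot{\sigma}\in S_\alpha\subseteq H_\alpha$ is already available (and for $n=2$ this is all of $\dot{\Sym}(X_2)$, so the lemma holds trivially in that case), so the real task is to produce $\dot{(x_{i+1},x_{i+2})}$ for $1\le i\le n-2$.

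The idea is to transport suitable \emph{second-level} permutations, which live in $x_1V_n\le H_\alpha$ by Lemma~\ref{lem:sum-in-V_n}, onto the first level by conjugating with $\tau$. Concretely, for $1\le i\le n-1$ set $g_i\defeq x_1\dot{(x_i,x_{i+1})}\in x_1V_n\le H_\alpha$; by Notation~\ref{not:gamma-vert-v} the element $g_i$ interchanges the blocks $x_1x_i\mathfrak{C}_n$ and $x_1x_{i+1}\mathfrak{C}_n$ acting identically on tails and fixes the rest of $\mathfrak{C}_n$. Recalling that $\tau$ is an involution with $\tau(x_1x_j\xi)=x_{j+1}\xi$ and $\tau(x_{j+1}\xi)=x_1x_j\xi$ for all $1\le j\le n-1$, one checks for $1\le i\le n-2$ that $g_i^{\tau}=\tau g_i\tau$ has support $x_{i+1}\mathfrak{C}_n\cup x_{i+2}\mathfrak{C}_n$ and satisfies $g_i^{\tau}(x_{i+1}\zeta)=\tau(g_i(\tau(x_{i+1}\zeta)))=\tau(g_i(x_1x_i\zeta))=\tau(x_1x_{i+1}\zeta)=x_{i+2}\zeta$, and symmetrically on $x_{i+2}\mathfrak{C}_n$; that is, $g_i^{\tau}=\dot{(x_{i+1},x_{i+2})}\in H_\alpha$.

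Combining these with $\dot{\sigma}=\dot{(x_1,x_2)}$ gives all adjacent transpositions of $X_n$ inside $H_\alpha$, hence $\dot{\Sym}(X_n)\le H_\alpha$. I do not expect any genuine obstacle; the only point to be careful about is the restriction $i\le n-2$, which is exactly what forces both blocks $x_1x_i\mathfrak{C}_n$ and $x_1x_{i+1}\mathfrak{C}_n$ to lie in the part of $x_1\mathfrak{C}_n$ that $\tau$ moves onto genuine first-level blocks, rather than onto the $\tau$-fixed block $x_1x_n\mathfrak{C}_n$, so that the conjugates $g_i^{\tau}$ really are first-level permutations and not more complicated elements of $V_n$.
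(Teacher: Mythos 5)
Your proposal is correct and follows essentially the same route as the paper: both use Lemma~\ref{lem:sum-in-V_n} to place second-level permutations inside $x_1V_n\le H_\alpha$, conjugate them by $\tau$ to obtain permutations of $X_n\setminus\{x_1\}$ on the first level, and then combine with $\dot{\sigma}=\dot{(x_1,x_2)}$ to generate all of $\dot{\Sym}(X_n)$. The only difference is that you carry this out transposition by transposition, while the paper conjugates the whole subgroup $x_1\dot{\Sym}(X_n\setminus\{x_n\})$ at once.
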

\begin{proof}
From Lemma~\ref{lem:sum-in-V_n} it follows that $x_1 \dot{\Sym}(X_n)$ is a subgroup of $H_{\alpha}$.
In particular it follows that the group $x_1 \dot{\Sym}(X_n \setminus \{x_n\})$ and its conjugate
\[
(x_1 \dot{\Sym}(X_n \setminus \{x_n\}))^{\tau} = \dot{\Sym}(X_n \setminus \{x_1\})
\]
lie in $H_{\alpha}$.
Since $\langle \sigma, \Sym(X_n \setminus \{x_1\}) \rangle = \Sym(X_n)$, we deduce that $\dot{\Sym}(X_n) \leq H_{\alpha}$.
\end{proof}

\begin{theorem}\label{thm:main}
The group $H_{\alpha}$ coincides with $V_n$.
\end{theorem}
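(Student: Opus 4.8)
The plan is to show that $H_{\alpha}$ strictly contains a maximal subgroup of $V_n$, and then conclude via Proposition~\ref{prop:maximal}.

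First I would assemble the pieces already proved. Lemma~\ref{lem:sum-in-V_n} gives $x_i V_n \le H_{\alpha}$ for every $x_i \in X_n$, hence the direct sum $\bigoplus_{i=1}^n x_i V_n$ lies in $H_{\alpha}$; and Lemma~\ref{lem:sym-in-V_n} gives $\dot{\Sym}(X_n) \le H_{\alpha}$. Since $\dot{\Sym}(X_n)$ permutes the summands $x_i V_n$, it normalizes $\bigoplus_{i=1}^n x_i V_n$, so $M \defeq \bigoplus_{i=1}^n x_i V_n \rtimes \dot{\Sym}(X_n)$ is a subgroup of $H_{\alpha}$. By Proposition~\ref{prop:maximal} the subgroup $M$ is maximal in $V_n$, so $H_{\alpha}$ equals either $M$ or $V_n$, and it remains only to rule out $H_{\alpha}=M$.

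For this I would note that every element of $M$ maps each first-level piece $x_j\mathfrak{C}_n$ onto a first-level piece: writing an element of $M$ as $h\dot{\pi}$ with $h \in \bigoplus_i x_i V_n$ and $\pi \in \Sym(X_n)$, one has $h\dot{\pi}(x_j\mathfrak{C}_n) = x_{\pi(j)}\mathfrak{C}_n$. But $\tau$ does not have this property: from its defining formula, $\tau$ carries $x_1x_i\mathfrak{C}_n$ onto $x_{i+1}\mathfrak{C}_n$ for $1\le i<n$ and fixes $x_1x_n\mathfrak{C}_n$, so $\tau(x_1\mathfrak{C}_n)$ strictly contains $x_2\mathfrak{C}_n$ while being a proper subset of $\mathfrak{C}_n$; in particular it is not a first-level piece. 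Hence $\tau \notin M$, so $M \subsetneq H_{\alpha}$, and maximality of $M$ forces $H_{\alpha} = V_n$.

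There is essentially no obstacle here beyond bookkeeping: all of the substantive work has been done in Lemmas~\ref{lem:containing-x_1x_2V_n}, \ref{lem:sum-in-V_n} and~\ref{lem:sym-in-V_n}, together with the maximality statement. The only point needing a moment's care is confirming that some element of $S_{\alpha}$ escapes $M$; since $\dot{\sigma} \in \dot{\Sym}(X_n)$ and $\supp(s_{\alpha}) \subseteq x_1\mathfrak{C}_n$ both lie in $M$, this forces the use of $\tau$, and the verification above is immediate from the explicit description of $\tau$.
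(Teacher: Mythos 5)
Your proof is correct and follows exactly the paper's argument: assemble the maximal subgroup $\bigoplus_{i=1}^n x_i V_n \rtimes \dot{\Sym}(X_n)$ inside $H_{\alpha}$ from Lemmas~\ref{lem:sum-in-V_n} and~\ref{lem:sym-in-V_n}, then use $\tau$ to escape it and invoke Proposition~\ref{prop:maximal}. The only difference is that you spell out the (easy) verification that $\tau$ does not preserve the first-level partition, which the paper leaves implicit.
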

\begin{proof}
From Lemma~\ref{lem:sum-in-V_n} and Lemma~\ref{lem:sym-in-V_n} we know that $H_{\alpha}$ contains $\oplus_{i=1}^n x_i V_n \rtimes \dot{\Sym(X_n)}$, which is a maximal subgroup of $V_n$ by Proposition~\ref{prop:maximal}.
Since $\tau \in H_{\alpha}$ is not contained in $\oplus_{i=1}^n x_i V_n \rtimes \dot{\Sym(X_n)}$, it therefore follows that $H_{\alpha}$ coincides with $V_n$.
\end{proof}

\bibliographystyle{alpha}
\bibliography{literature.bib}

\end{document}